\theoremstyle{plain}
\newtheorem{thm}{Theorem}
\newtheorem{lem}{Lemma}[section]
\newtheorem{prop}[lem]{Proposition}
\newtheorem{theo}[lem]{Theorem}
\theoremstyle{definition}
\newtheorem{rem}[lem]{Remark}
\newtheorem{ex}[lem]{Example}
\newcommand{\R}{\mathbb{R}}
\newcommand{\Z}{\mathbb{Z}}
\newcommand{\C}{\mathbb{C}}
\newcommand{\Q}{\mathbb{Q}}
\newcommand{\K}{\mathbb{K}}
\newcommand{\F}{\mathbb{F}}
\newcommand{\cC}{\mathcal{C}}
\newcommand{\cM}{\widehat{\mathcal{M}}}
\newcommand{\GL}{\mathrm{GL}}
\newcommand{\SL}{\mathrm{SL}}
\newcommand{\PGL}{\mathrm{PGL}}
\newcommand{\pP}{{\mathbb{P}}}
\newcommand{\id}{\textup{Id}}
\newcommand{\Char}{\textup{char}}
\newcommand{\mA}{\mathrm{A}}
\def\a{\alpha}
\def\b{\beta}
\def\l{\lambda}
\begin{document}

\title[Counting Coxeter's friezes over a finite field...]{Counting Coxeter's friezes over a finite field\\ 
via moduli spaces}

\author{Sophie Morier-Genoud}

\address{Sophie Morier-Genoud,
Sorbonne Universit\'e, Universit\'e de Paris, CNRS, 
Institut de Math\'ematiques de Jussieu-Paris Rive Gauche, IMJ-PRG, F-75005, Paris, France,
}

\email{sophie.morier-genoud@imj-prg.fr
}

\date{}

\keywords{Frieze, Moduli space, Finite field,  Partitions, Stirling numbers, cluster variety} 


\begin{abstract}
We count the number of Coxeter's friezes over a finite field. 
Our method uses geometric realizations of the spaces of friezes in a certain completion of the classical moduli space $\mathcal{M}_{0,n}$ allowing repeated points in the configurations. 
Counting points in the completed moduli space over a finite field is related to the enumeration problem of counting partitions of cyclically ordered set of points into subsets containing no consecutive points.  In Appendix we provide an elementary solution for this enumeration problem. \end{abstract}

\maketitle



\section*{Introduction}

The notion of friezes goes back to Coxeter in the early 70's \cite{Cox}.
They are arrays of numbers in which every four adjacent values forming a square are related by the same arithmetic condition $ad-bc=1$. The classification of friezes over positive integers is given by a beautiful result of Conway and Coxeter \cite{CoCo} establishing an unexpected bijection between friezes and triangulations of polygons. As a consequence the number of friezes over positive integers with a fixed number of rows is given by a Catalan number. 

Friezes became popular in the last decade due to connections with the theory of cluster algebras of Fomin and Zelevinsky. Many variants of friezes have been recently introduced and studied. In particular Coxeter's friezes over other sets of numbers, or other types of friezes over positive integers, were investigated in various cases leading to new interesting combinatorics, e.g.
\cite{BaMa}, \cite{BeRe}, \cite{Fon}, \cite{FoPl}, \cite{Cun2}, \cite{Ovs3d},  \cite{CuHo}, \cite{HoJo3}.

In the present paper we determine the number of Coxeter's friezes over an arbitrary finite field. Our main result gives explicit polynomial formulas according to the ``width'' of the friezes, i.e. the number of rows in the friezes. We express the formulas with the help of the $q$-integer
$[m]_{q^{2}}=\frac{q^{2m}-1}{q^{2}-1}$ and the $q$-binomial coefficient 
${m \choose 2 }_{q}=\frac{(q^{m}-1)(q^{m-1}-1)}{(q-1)(q^{2}-1)}$.

\begin{thm}\label{nbfriz}(i)
The number of tame friezes of width $2m-2$ over $\F_{q}$ is given by
\begin{equation}\label{f2m}
 f_{2m-2}=[m]_{q^{2}},
  \end{equation}
 for all $m\geq 2$;
 
 (ii)
 The total number of tame friezes of width $2m-3$ over $\F_{q}$ is given by
\begin{equation}\label{f4m-1}
f_{2m-3}=
 \begin{cases}
(q-1){m \choose 2 }_{q}{}, &\text{if } \Char(\F_{q}) \not=2, \text{and } m \text { even}\\[12pt]
(q-1){m \choose 2 }_{q} + q^{m-1}, &\text{otherwise} \\[1pt]
\end{cases}
\end{equation}
for all $m\geq 2$.
\end{thm}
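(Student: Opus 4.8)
The plan is to realise tame friezes as $\F_q$-points of a space that embeds in the completed moduli space $\cM_{0,n}$ with $n=w+3$, and then to count by degenerations. Concretely, a tame frieze of width $w$ over $\F_q$ is encoded by an $n$-periodic sequence of vectors $(V_i)$ in $\F_q^{2}$ with $\det(V_i,V_{i+1})=1$, equivalently by a quiddity sequence $(c_1,\dots,c_n)$ with $M(c_1)\cdots M(c_n)=-\Id$, where $M(c)=\left(\begin{smallmatrix}c&-1\\1&0\end{smallmatrix}\right)$. Sending $(V_i)$ to the cyclically ordered configuration $([V_1],\dots,[V_n])$ in $\pP^{1}(\F_q)$, taken modulo $\PGL_2(\F_q)$, lands in $\cM_{0,n}$: tameness forces consecutive points to be distinct, while non-consecutive points may collide. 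First I would analyse the fibre of this map. Rescaling $V_i\mapsto\lambda_iV_i$ preserves all the determinants exactly when $\lambda_{i+1}=\lambda_i^{-1}$; for $n$ odd this forces $\lambda_i=\pm1$ (rigid lift), whereas for $n$ even an entire $\F_q^{\times}$ of lifts survives. This parity dichotomy is precisely what separates cases (i) and (ii).

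The combinatorial engine is a stratification of $\cM_{0,n}$ by coincidence pattern. A pattern is a partition of the cyclically ordered set $\{1,\dots,n\}$ into blocks, recording which indices carry equal points, subject to the tameness constraint that no block contains two consecutive indices. If a pattern has $k$ blocks, the corresponding stratum is a copy of $\mathcal{M}_{0,k}$, as it parametrises $k$ distinct points of $\pP^{1}$. Using that $\PGL_2(\F_q)$ acts sharply $3$-transitively on $\pP^{1}(\F_q)$, every stratum with $k\ge 3$ contains exactly $|\mathcal{M}_{0,k}(\F_q)|=\prod_{j=2}^{k-2}(q-j)$ points (the empty product at $k=3$ being $1$). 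Writing $C(n,k)$ for the number of admissible patterns with $k$ blocks, the cyclic non-consecutive analogue of the Stirling numbers treated in the Appendix, I would obtain
\[
|\cM_{0,n}(\F_q)|=\sum_{k\ge 3}C(n,k)\prod_{j=2}^{k-2}(q-j),
\]
together with a separate contribution from the unique two-block (strictly alternating) pattern when $n$ is even.

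For part (i), $w=2m-2$ gives $n=2m+1$ odd. Here the lift is rigid, so friezes are in bijection with $\cM_{0,2m+1}(\F_q)$, and no two-block pattern exists because an odd cycle carries no independent set of size $n/2$. The whole count therefore reduces to evaluating $\sum_{k\ge 3}C(2m+1,k)\prod_{j=2}^{k-2}(q-j)$. Feeding in the closed form for $C(2m+1,k)$ from the Appendix and telescoping, I expect this sum to collapse to the $q$-integer $[m]_{q^{2}}$; the case $n=5$, where the three strata give $(q-2)(q-3)+5(q-2)+5=q^{2}+1=[2]_{q^{2}}$, is the model computation.

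Part (ii), $w=2m-3$ and $n=2m$ even, is where I expect the real difficulty. The map from friezes to configurations now has $\F_q^{\times}$-fibres from the surviving lift, and a frieze is an $\SL_2(\F_q)$-datum while a configuration is a $\PGL_2(\F_q)$-orbit; since $\SL_2$ surjects only onto $\PSL_2$, of index $2$ in $\PGL_2$ when $\Char(\F_q)\ne 2$, each stratum count must be recalibrated, and the recalibrated bulk sum should assemble into $(q-1)\binom{m}{2}_{q}$. The delicate point is the strictly alternating configuration: repeating the lift analysis on such a degenerate configuration produces a closing-up condition of the shape $(-1)^{m}=1$ in $\F_q$, whose validity depends jointly on the parity of $m$ and on $\Char(\F_q)$. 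Combined with the $\SL_2$ versus $\PGL_2$ discrepancy and the $\pm\Id$ ambiguity, this is what I expect to produce the correction term $q^{m-1}$ precisely outside the regime $\Char(\F_q)\ne 2$ with $m$ even, and disentangling these three effects is the main obstacle. I would pin down all signs and multiplicities by checking $m=2$ by hand, where $M(c_1)\cdots M(c_4)=-\Id$ has $q-1$ solutions for $\Char(\F_q)\ne 2$ and $2q-1$ solutions for $\Char(\F_q)=2$, matching $(q-1)\binom{2}{2}_{q}$ and $(q-1)\binom{2}{2}_{q}+q$ respectively.
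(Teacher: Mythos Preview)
Your overall strategy—realise friezes inside $\cM_{0,n}$ and count—matches the paper's, and your bijection for odd $n$ is exactly Theorem~\ref{frizmod}. The two arguments diverge in how the counting is carried out.

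For part (i), the paper does not stratify by coincidence pattern. It counts $|\cC_n(\F_q)|$ directly via the recursion $c_{n+2}=(q-1)c_{n+1}+qc_n$, solving to $c_n=q^n+(-1)^nq$ (Lemma~\ref{lemcn}), and then divides by $|\PGL_2(\F_q)|$. Your detour through the pattern numbers $C(n,k)$ is in fact the Appendix run backwards: in the paper the closed form for $A_{k,n}$ is \emph{deduced} from the value of $c_n$, not used to compute it. Your route is not wrong if you import the $A_{k,n}$ formula from an independent source, but it is considerably longer and forfeits the two-line directness of the recursion.

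For part (ii) there is a genuine gap. The paper does not invoke the index of $\mathrm{PSL}_2$ in $\PGL_2$; it isolates an explicit codimension-one subvariety $\cM^+_{0,2m}\subset\cM_{0,2m}$ cut out by
\[
\frac{(v_1-v_2)(v_3-v_4)\cdots(v_{2m-1}-v_{2m})}{(v_2-v_3)(v_4-v_5)\cdots(v_{2m}-v_1)}=-1,
\]
and proves (Theorem~\ref{frizmodeven}) that this equation is precisely the obstruction to an antiperiodic lift with constant consecutive determinants, so that $\{\text{friezes}\}/\K^*$ is in bijection with $\cM^+_{0,2m}(\K)$. The count of $|\cM^+_{0,2m}(\F_q)|$ then proceeds by the sign-swapping recursion $c^\pm_{2m}=c_{2m-1}+qc^\mp_{2m-2}$ (Lemma~\ref{lemcpm}), which unwinds to a telescoping sum terminating in $c_2^{(-)^{m-1}}$; since $\cC_2^-=\emptyset$ while $\cC_2^+=\cC_2$, and $\cC_2^+=\cC_2^-$ in characteristic~$2$, this boundary term is the exact mechanism producing the $q^{m-1}$ correction. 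Your $\SL_2$/$\PGL_2$ heuristic gestures at the right factor of two but does not identify which configurations actually lift to friezes, and ``recalibrating stratum counts'' would still require knowing, stratum by stratum, whether the defining equation of $\cM^+$ is satisfied—something your proposal does not supply.
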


We solve this problem by using geometric realizations of the spaces of friezes in moduli spaces of configurations of points in the projective line. 
The realizations are different according to the parity of the number of rows in the friezes leading to different formulas according to this parity.

Friezes over complex numbers with even number of rows containing no zero entries, form an algebraic variety isomorphic to the classical moduli space $\mathcal{M}_{0,n}$, where $n$ is the width plus three, \cite{Cox}, \cite{MGOTaif}. Tame friezes are more general friezes allowing zero entries. When the number of rows is even, tame friezes form a bigger variety denoted $\cM_{0,n}$ which is some completion of $\mathcal{M}_{0,n}$ \cite{MGOST}, \cite{MGblms}. This is no longer true when the friezes have an odd number of rows, one has to consider a subvariety of $\cM_{0,n}$ that we denote $\cM^{+}_{0,n}$. 

Considering the variety of friezes as a cluster variety of type $\mA$ the result of Theorem \ref{nbfriz} agrees with the result of \cite{Cha}.

The paper is organized as follows.

Section \ref{friz} is a brief introduction to 
Coxeter's friezes. In particular we recall the definition of tame friezes and their main properties.

In Section \ref{mon} we explain the geometric realizations of the spaces of friezes.  
Cases of friezes with odd or even number of rows need to be distinguished. 
As a new result we get the geometric realization of the space of tame friezes with odd number of rows as $\cM^{+}_{0,n}$, Theorem \ref{frizmodeven}.  

In Section \ref{count} we determine the number of points in the spaces  $\cM_{0,n}$ and $\cM^{+}_{0,n}$ when considered over $\F_{q}$, Theorem \ref{contmon} and Theorem \ref{chat} respectively, and we prove Theorem \ref{nbfriz}. We end the section with examples of friezes over $\F_{q}$ and open problems.

Appendix \ref{part} can be read independently of the rest of the paper. It deals with
an enumeration problem of restricted set partitions for which we provide an elementary solution.
More precisely we establish an explicit formula for the number of partitions of $n$ points in the circle into $k$ subsets avoiding consecutive points, Proposition~\ref{Akn}. Our method is related to the counting of points in $\cM_{0,n}$ over $\F_{q}$ and requires only the formula of Lemma \ref{lemcn} established for the proof of Theorem \ref{contmon}. 
These numbers of restricted partitions are known and much information can be found about them on A261139 of OEIS.\\

\section{Coxeter's friezes}\label{friz}

\subsection{Definition}
Coxeter's friezes \cite{Cox} are arrays of numbers that can be described as follows:

(i) The array has finitely many rows, all of them being infinite on the right and left;

(ii) The first and last rows 
are rows of 1's;

(iii) Consecutive rows are displayed with a shift, and every four adjacent entries $a, b,c,d$  forming a diamond $$
 \begin{array}{ccccccc}
 &b&\\
 a&&d\\
 &c&
\end{array}$$ satisfy the \textit{unimodular rule}: $ad-bc=1$.

The number of rows strictly between the border rows of 1's is called the \textit{width} of the frieze.
The following array \eqref{exCoCo} is an example of a frieze pattern of width  $w=4$. 
\begin{equation}\label{exCoCo}
 \begin{array}{lcccccccccccccccccccccccc}
\text{\small{row 0}}&&&1&&1&& 1&&1&&1&&1&&1&& \cdots\\[4pt]
\text{\small{row 1}}&&\cdots&&4&&2&&1&&3&&2&&2&&1&&
 \\[4pt]
\text{\small{row 2}}&&&3&&7&&1&&2&&5&&3&&1&&\cdots&\
 \\[4pt]
\cdots &&\cdots& &5&&3&&1&&3&&7&&1&&2&\\[4pt]
\text{\small{row $w$}}&&& 3&&2&&2&&1&&4&&2&&1&&\cdots\\[4pt]
\text{\small{row $w+1$}}&&\cdots&&1&&1&&1&&1&&1&&1&&1&&
\end{array}
\end{equation}

One can observe that the above example of frieze contains only positive integers. However, the definition allows the frieze to take its values in any subset of a unital ring.
Conway and Coxeter \cite{CoCo} established a surprising one-to-one correspondence between the friezes over positive integers and the triangulations of polygons. As a consequence of this correspondence one gets the following.

\begin{theo}[\cite{CoCo}]
The number of friezes of width $w$ over positive integers is given by the Catalan number $C_{w+1}=\frac{1}{w+2} {{2w+2}\choose{w+1}}$.
\end{theo}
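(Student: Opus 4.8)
The plan is to deduce the count from the Conway--Coxeter correspondence cited above, which I will take as established, and to supply the index bookkeeping. The first step is to reduce a frieze of width $w$ to a finite combinatorial datum. Because of the glide symmetry of a frieze together with the unimodular rule, the entire pattern is determined by its first nontrivial row, which is periodic of period $n=w+3$; this period is recorded by the \emph{quiddity sequence} $(a_1,\dots,a_n)$ of positive integers. Standard continuant formulas express each entry of the frieze as a determinant of a tridiagonal matrix built from the $a_i$, and the requirement that the pattern close up with border rows of $1$'s translates into the matrix identity
\[
\prod_{i=1}^{n}\begin{pmatrix} a_i & -1\\ 1 & 0\end{pmatrix}=\pm\,\Id .
\]
Thus counting friezes of width $w$ over positive integers amounts to counting cyclic sequences of positive integers of length $n=w+3$ satisfying this closure condition.

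The second step is the Conway--Coxeter bijection between such quiddity sequences and triangulations of a convex $n$-gon. To a triangulation one associates $(a_1,\dots,a_n)$, where $a_i$ is the number of triangles incident to the $i$-th vertex; conversely every valid quiddity sequence should arise from a unique triangulation. The natural way to establish this is by induction on $n$ using \emph{ears}: a triangulated polygon with $n\geq 4$ vertices has a vertex lying on a single triangle, which corresponds to an entry $a_i=1$, and removing that ear reduces the $n$-gon to an $(n-1)$-gon while performing the local move $(\dots,a_{i-1},1,a_{i+1},\dots)\mapsto(\dots,a_{i-1}-1,a_{i+1}-1,\dots)$ on the quiddity. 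One checks this move is compatible with the matrix identity, since $\begin{pmatrix} a & -1\\ 1 & 0\end{pmatrix}\begin{pmatrix} 1 & -1\\ 1 & 0\end{pmatrix}\begin{pmatrix} b & -1\\ 1 & 0\end{pmatrix}=\begin{pmatrix} a-1 & -1\\ 1 & 0\end{pmatrix}\begin{pmatrix} b-1 & -1\\ 1 & 0\end{pmatrix}$, so the closure condition and positivity are preserved. The base case is the triangle $n=3$, whose unique triangulation yields the constant quiddity $(1,1,1)$ and the frieze of width $0$.

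The final step is purely enumerative: the number of triangulations of a convex polygon with $n$ vertices is the Catalan number $C_{n-2}$. Substituting $n=w+3$ gives the count $C_{w+1}=\frac{1}{w+2}\binom{2w+2}{w+1}$, which is exactly the claimed formula.

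I expect the main obstacle to be the bijection of the second step, and in particular its surjectivity --- that \emph{every} positive-integer quiddity sequence comes from a triangulation. The inductive ear argument handles this once one verifies that some $a_i=1$ must occur, which follows from positivity together with the closure condition, and that the reduction move keeps the resulting sequence positive and closed. Since the correspondence is quoted from \cite{CoCo}, the present statement follows from it immediately via the index substitution $n=w+3$ carried out above.
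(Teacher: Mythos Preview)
The paper does not give its own proof of this theorem: it is stated as a direct consequence of the Conway--Coxeter bijection between positive-integer friezes and triangulations of polygons, with the bijection itself left to the cited reference \cite{CoCo}. Your proposal follows exactly this route --- take the bijection as established and carry out the index substitution $n=w+3$ to land on $C_{n-2}=C_{w+1}$ --- so it matches the paper's treatment; you simply add an ear-removal sketch that the paper omits.

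Two small remarks on that sketch. First, the closure condition is $\prod_i\begin{pmatrix} a_i & -1\\ 1 & 0\end{pmatrix}=-\Id$, not $\pm\Id$ (compare \eqref{mateq}). Second, for your reduction to preserve positivity you need that no two consecutive $a_i$ equal $1$ when $n\geq 4$; this holds because the second-row entry between them would be $a_ia_{i+1}-1=0$, contradicting positivity. With these points noted, your outline is sound.
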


Several enumerative interpretations for the entries appearing in the friezes are known, see \cite{MGblms} for an overview.

\subsection{Tame friezes} \label{tamsec}
By convention one extends the friezes top and bottom by rows of 0's and $-1$'s so that the unimodular rule still holds.
Following \cite{BeRe} a frieze is said to be \textit{tame} if it satisfies the following extra condition: every $3\times 3$-entries in diamond in the extended array

\begin{equation}\label{tam}
 \begin{array}{ccccccccccc}
 &&c\\
 &b&&f\\
 a&&e&&i\\
 &d&&h\\
 &&g
\end{array}
\end{equation}
form a matrix of rank 2. 

Friezes with no zero entries between the bording rows of 1's are all tame, but tame friezes may contain zeroes.
Generically when $e\not=0$ the determinant of the matrix \eqref{tam} formed by the ajacent entries in a frieze vanishes by Desnanot-Jacobi identity.  Hence the tameness condition has to be checked only for entries centered at the zeroes of the frieze.

Many properties established by Coxeter in the case of friezes over positive real numbers can be directly generalized to tame friezes with entries in any unital ring. 
Non-tame friezes are called ``wild'' and may have completely different behavior, see \cite{Cun2}. 
In this paper we always assume that the friezes are tame.

Let us recollect some of the main properties of tame friezes.

\begin{theo}[\cite{Cox}]\label{thmper}
Tame friezes of width $w$ are $(w+3)$-periodic. Moreover they are invariant under a glide reflection.
\end{theo}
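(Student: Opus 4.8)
The plan is to lift the frieze to a doubly-infinite sequence of vectors in the rank-two free module $\K^{2}$ (where $\K$ is the ground ring) and to deduce both assertions from a single relation of the form $V_{i+n}=-V_i$, with $n:=w+3$. Concretely, I would write $(c_i)_{i\in\Z}$ for the entries of the first interior row, choose $V_0,V_1$ with $\det(V_0,V_1)=1$, and define the remaining $V_i$ by the three-term recurrence $V_{i+1}=c_iV_i-V_{i-1}$. The recurrence at once gives $\det(V_i,V_{i+1})=\det(V_0,V_1)=1$ for every $i$, so each pair $(V_i,V_{i+1})$ is a basis of $\K^{2}$. The crux of this step is the claim that the minor $\det(V_i,V_{i+k})$ equals the frieze entry lying in row $k-1$ at the diagonal position attached to $i$: both sides obey the same continuant recurrence in $k$ (for the minors this is built into the definition of the $V_i$, for the entries it is the recurrence encoded in the unimodular rule), and they agree at $k=0$ and $k=1$, so induction on $k$ finishes the identification.

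Next I would extract the boundary relation. The bottom border (row $w+1$, all $1$'s) reads as $\det(V_i,V_{i+w+2})=1$, while the first extension row (row $w+2$, all $0$'s) reads as $\det(V_i,V_{i+w+3})=0$. Expanding $V_{i+w+3}$ in the basis $(V_i,V_{i+1})$ and using the last vanishing forces $V_{i+w+3}=\lambda_iV_i$ for a scalar $\lambda_i$. Pairing this with $V_{i+1}$ and using $\det(V_{i+1},V_{i+w+3})=1$ (this minor again sits in row $w+1$) together with $\det(V_{i+1},V_i)=-1$ yields $\lambda_i=-1$. Hence $V_{i+n}=-V_i$ for all $i$, with $n=w+3$.

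Periodicity and the glide symmetry are then formal consequences. Every frieze entry is a minor $\det(V_i,V_j)$, and $\det(V_{i+n},V_{j+n})=\det(-V_i,-V_j)=\det(V_i,V_j)$, so the array is $n$-periodic. For the glide reflection I would combine $V_{i+n}=-V_i$ with the antisymmetry of the determinant:
\[
\det(V_i,V_{i+k})=-\det(V_i,V_{i+k-n})=\det(V_{i+k-n},V_i).
\]
The right-hand minor sits in row $(n-k)-1=w+2-k$, the mirror image of row $k-1$ across the horizontal midline, while the index shift $i\mapsto i+k-n$ amounts to a horizontal translation by half the period $n/2$. Read as a single transformation of the array, this is exactly the asserted glide reflection.

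The hard part is the very first step, producing the vectors $V_i$ and certifying that their minors reproduce the frieze, because this is precisely where tameness is indispensable. The naive passage from the unimodular rule to the diagonal continuant recurrence proceeds by dividing by the central entry $e$ and therefore breaks down wherever $e=0$; it is exactly here that the rank-$2$ condition \eqref{tam} must be invoked to guarantee that the three-term recurrence survives across the zeros. Once the lift is in hand, everything that follows is the short linear-algebra above, valid over any unital ring (in characteristic $2$ one merely has $V_{i+n}=V_i$, which is harmless for periodicity).
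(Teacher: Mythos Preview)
Your argument is correct and is exactly the mechanism the paper has in mind: it does not prove Theorem~\ref{thmper} but cites it from \cite{Cox} and remarks that the diagonal recursion \eqref{recfri}, extended to the bordering rows of $0$'s and $-1$'s, ``allows to immediately deduce the glide reflection property''; your vectors $V_i$ are the same lift the paper uses later in \eqref{seqfri} and in Theorem~\ref{crit}, and your identity $V_{i+n}=-V_i$ is the content of \eqref{mateq}. One cosmetic remark: the shift $i\mapsto i+k-n$ looks $k$-dependent only because you are using diagonal coordinates; passing to column coordinates (each step down a diagonal moves half a column) makes the horizontal translation equal to $n/2$ for every row, so the ``half the period'' description is indeed the glide reflection.
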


The above property can be observed in the frieze \eqref{exCoCo}. This array consists in the fundamental domain
$$
 \begin{array}{lcccccccccccccccccccccccc}
1&&1&& 1&&1&&1&&1&&\\
&4&&2&&1&&3&&2&&&&
 \\
&&7&&1&&2&&5&&&
 \\
 &&&3&&1&&3&&\\
&&&&2&&1&&&\\
&&&&&1&&
\end{array}
$$
repeated horizontally under glide reflections. We will sometimes represent a fundamental domain instead of the whole frieze.

The entries in the first row of a tame frieze of width $w$ will be denoted $a_{1}, \ldots, a_{n}$, with $n=w+3$.

\begin{theo}[\cite{Cox}]
The South-East diagonal $(\Delta_{i})_{0\leq i \leq w+1}$ in a tame friezes of width $w$, starting at $\Delta_{0}=1, \Delta_{1}=a_{1}$,
satisfies the recursion
\begin{equation}\label{recfri}
\Delta_{i+1}=a_{i+1}\Delta_{i}-\Delta_{i-1}, \; 1\leq i \leq w,
\end{equation}
and similarly for the other diagonals up to shifts in the indices.
\end{theo}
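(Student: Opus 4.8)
The plan is to realize every entry of the frieze as a $2\times 2$ determinant of a bi-infinite sequence of vectors in a rank-two free module over the ground ring, and then read off the recursion from the linear dependence of three consecutive vectors. Concretely, I would fix $v_{-1},v_0$ with $\det(v_{-1},v_0)=1$ and \emph{define} the sequence by $v_{i+1}=a_{i+1}v_i-v_{i-1}$, where $a_1,\dots,a_n$ is the first (nontrivial) row of the frieze, extended $n$-periodically. Bilinearity and antisymmetry of the determinant then give, by an immediate induction, $\det(v_i,v_{i+1})=1$ for all $i$, reproducing the border row of $1$'s, and $\det(v_{i-1},v_{i+1})=a_{i+1}$, reproducing the first row.

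The substantial step is to check that the full array $\big(\det(v_j,v_k)\big)_{j,k}$ coincides with the given frieze. Here I would use that this array satisfies the unimodular rule: for four vectors the Grassmann--Pl\"ucker relation
\[
\det(v_a,v_b)\det(v_c,v_d)-\det(v_a,v_c)\det(v_b,v_d)+\det(v_a,v_d)\det(v_b,v_c)=0
\]
holds identically in a rank-two module, and specializing it to a $2\times 2$ block of the array and using $\det(v_i,v_{i+1})=1$ turns it into $ad-bc=1$. Moreover the array is automatically tame, since all $v_i$ lie in a rank-two module, so every $3\times 3$ diamond of determinants has rank at most $2$. As the unimodular rule propagates each entry from its three neighbours wherever the relevant divisor is nonzero, and tameness resolves the remaining ambiguity exactly at the zeros (this is where the Desnanot--Jacobi remark recalled above is used), the array is the unique tame frieze with the prescribed top rows, hence equals the given one.

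Granting the realization, the recursion is a one-line consequence. Setting $\Delta_i=\det(v_{-1},v_i)$ we have $\Delta_0=\det(v_{-1},v_0)=1$ and $\Delta_1=\det(v_{-1},v_1)=a_1$, and applying $\det(v_{-1},\,\cdot\,)$ to $v_{i+1}=a_{i+1}v_i-v_{i-1}$ yields $\Delta_{i+1}=a_{i+1}\Delta_i-\Delta_{i-1}$; equivalently this is the Grassmann--Pl\"ucker relation for the four vectors $v_{-1},v_{i-1},v_i,v_{i+1}$. The clause ``and similarly for the other diagonals'' is handled by replacing the fixed vector $v_{-1}$ with another $v_j$ and letting the second index run, which gives the same three-term recursion with the indices shifted.

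I expect the realization step to be the main obstacle: producing the sequence $v_i$ consistently and matching every determinant with the corresponding frieze entry, especially near the zero entries, where the unimodular rule alone does not pin down neighbours and the tameness hypothesis---the vanishing of the $3\times 3$ diamonds, equivalent to three consecutive $v_i$ being linearly dependent---is precisely what makes the realization possible. Once this is in place the recursion follows immediately, and the same identity simultaneously accounts for the border row of $1$'s and the first row.
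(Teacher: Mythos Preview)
The paper does not supply its own proof of this statement: it is quoted from \cite{Cox} as background, so there is no argument in the paper to compare yours against. That said, your strategy is sound, and in fact it anticipates exactly the construction the paper uses later for the correspondence with $\cM_{0,n}$: the sequence $(V_i)$ in \eqref{eqcoeff} and \eqref{seqfri}, built from two adjacent diagonals of the frieze, is your $(v_i)$, and the frieze entries are recovered as the $2\times 2$ minors of consecutive pairs. So your realization is not merely one possible route---it is the paper's working model of a tame frieze.

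The step you rightly flag as the obstacle, namely that the determinant array $\big(\det(v_j,v_k)\big)$ actually \emph{coincides} with the given tame frieze rather than with some other tame array sharing the same first two rows, can be closed directly. One proves by induction on the row index that rows $0$ and $1$ determine a tame frieze uniquely. For an entry $g$ in row $r+1$, the unimodular relation $dh-eg=1$ (with $e$ two rows above $g$) pins down $g$ whenever $e\neq 0$. If $e=0$, expand the vanishing $3\times 3$ determinant of the diamond \eqref{tam} centred at $e$ along the row containing $g$; after substituting the four inner unimodular relations one obtains $bf\cdot g = bdi - a - c$, and since $bf - ce = 1$ with $e=0$ forces $bf=1$, the entry $g$ is again determined. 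Your determinant array is a tame frieze with the same rows $0$ and $1$, hence equals the given one, and then the recursion follows in one line exactly as you wrote---either by applying $\det(v_{-1},\cdot)$ to the defining relation for $v_{i+1}$, or equivalently by the Pl\"ucker identity for $v_{-1},v_{i-1},v_i,v_{i+1}$.
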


The above recursion gives an alternative way to compute the entries in the frieze from the first row without using the relation $ad-bc=1$. This relation may cause trouble in the case where one of the entries is zero.
A tame frieze starts as follows.
$$
 \begin{array}{lcccccccccccccccccccccccc}
1&&1&& 1&&1&&1& \\[4pt]
&a_{1}&&a_{2}&&\cdots&&a_{n}&&a_{1}&&
 \\[4pt]
\cdots&&a_{1}a_{2}-1&&a_{2}a_{3}-1&&\cdots&&a_{n}a_{1}-1&&
 \\[5pt]
&a_{n}a_{1}a_{2}-a_{2}&&a_{1}a_{2}a_{3}-a_{3}&&\cdots&&a_{n}a_{1}a_{2}-a_{2}&&\cdots&\\[-2pt]
&-a_{n}&&-a_{1}&&&&-a_{n}&&&&\\[4pt]
\ddots&&\ddots&&\ddots&&\ddots&&\ddots\\[6pt]
&1&&1&&1&&1&&1
\end{array}
$$
The recursion \eqref{recfri} remains valid in the frieze extended top and bottom with rows of 0's and $-1$'s. This allows to immediately deduce the glide reflection property of Theorem \ref{thmper}.
Furthermore, the recursion \eqref{recfri} in the extended frieze can be encoded by $2\times2$ matrices and lead to the following criterion.
\begin{theo}[\cite{Cox}]\label{crit}
The $n$-tuple $(a_{1}, \ldots, a_{n})\in \K^{n}$ determines the first row of a tame frieze if and only if
\begin{equation}\label{mateq}
\left(
\begin{array}{cc}
a_n&-1\\[4pt]
1&0
\end{array}
\right)
\cdots
\left(
\begin{array}{cc}
a_{2}&-1\\[4pt]
1&0
\end{array}
\right)
\left(
\begin{array}{cc}
a_{1}&-1\\[4pt]
1&0
\end{array}
\right)
=-\id.
\end{equation}
\end{theo}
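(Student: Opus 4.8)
The plan is to encode the frieze recursion \eqref{recfri} as a product of matrices and show that the closure condition of the frieze is exactly \eqref{mateq}. First I would introduce the matrix
\begin{equation*}
M_{i}=\left(\begin{array}{cc} a_{i}&-1\\[4pt] 1&0\end{array}\right),
\end{equation*}
and observe that the recursion $\Delta_{i+1}=a_{i+1}\Delta_{i}-\Delta_{i-1}$ for the South-East diagonal can be written as
\begin{equation*}
\left(\begin{array}{c}\Delta_{i+1}\\[4pt]\Delta_{i}\end{array}\right)=M_{i+1}\left(\begin{array}{c}\Delta_{i}\\[4pt]\Delta_{i-1}\end{array}\right).
\end{equation*}
Iterating from the initial data $\Delta_{0}=1$, $\Delta_{1}=a_{1}$ (equivalently starting the vector at $(\Delta_{1},\Delta_{0})^{T}=(a_{1},1)^{T}=M_{1}(1,0)^{T}$), any diagonal is obtained by applying successive $M_{i}$'s to a starting vector. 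The key point is that the frieze is determined by its first row via these recursions in the top-and-bottom extended array, so the whole combinatorial content of ``$(a_{1},\dots,a_{n})$ is the first row of a tame frieze'' is captured by how the product $M_{n}\cdots M_{1}$ acts.

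Next I would make precise what the border and periodicity conditions impose on this product. The rows of $0$'s and $-1$'s with which the frieze is extended furnish boundary values for each diagonal, and the glide-reflection/periodicity of Theorem~\ref{thmper} forces the diagonal, after passing through all $n$ columns of a fundamental domain, to return to the negative of its starting configuration. Concretely I would show that demanding every diagonal close up consistently after one period is equivalent to requiring that the linear map $M_{n}\cdots M_{1}$ send the initial vector $(1,0)^{T}$ to $(0,-1)^{T}$ and, by the same argument applied to a shifted starting diagonal, send $(0,-1)^{T}$ to $(-1,0)^{T}$; these two conditions together pin down the product as $-\id$. The converse direction is that if $M_{n}\cdots M_{1}=-\id$, then the sequence generated by \eqref{recfri} is automatically $(w+3)$-periodic with the correct border rows of $1$'s, so it does define a genuine tame frieze; here one uses that $\det M_{i}=1$ guarantees the unimodular rule along the whole array and that tameness is built into the recursion (which, unlike the rule $ad-bc=1$, never divides by a possibly-zero entry).

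I expect the main obstacle to be the bookkeeping that translates the closure of \emph{all} diagonals, together with the requirement that the first and last interior rows really are rows of $1$'s, into the single clean matrix identity $M_{n}\cdots M_{1}=-\id$ rather than merely into the weaker statement that this product is a scalar or that it fixes one line. The delicate step is checking both coordinates of the boundary behaviour simultaneously (so that one rules out $+\id$ and genuinely obtains $-\id$), and verifying that the identity obtained from one diagonal is equivalent to the identities coming from the shifted diagonals, so that a single matrix equation suffices for the entire frieze. Once the action of $M_{n}\cdots M_{1}$ on the standard basis vectors is identified with the required boundary data, the equivalence in both directions follows, and the criterion \eqref{mateq} is established.
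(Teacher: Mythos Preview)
The paper does not actually prove Theorem~\ref{crit}: it is stated as a result of Coxeter and the paper only remarks that the condition was established in \cite{Cox} via continuants, recovered in \cite{BeRe}, and in \cite{CuHo}, with the arguments extending verbatim to tame friezes over arbitrary unital rings. So there is no ``paper's own proof'' to compare against beyond these pointers.

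Your approach---writing the recursion \eqref{recfri} as $\bigl(\begin{smallmatrix}\Delta_{i+1}\\ \Delta_{i}\end{smallmatrix}\bigr)=M_{i+1}\bigl(\begin{smallmatrix}\Delta_{i}\\ \Delta_{i-1}\end{smallmatrix}\bigr)$ and reading the closure of the extended frieze as the statement that the product $M_{n}\cdots M_{1}$ sends the two boundary vectors to their negatives---is exactly the standard argument alluded to in the paper, and indeed one direction of it (friezes $\Rightarrow$ \eqref{mateq}) is carried out explicitly in the paper as Lemma~\ref{lemquid}. Your sketch is correct and complete enough at the level of a plan; the only point where you should be a bit more explicit is the claim that ``tameness is built into the recursion'': for the converse direction you should say that defining all diagonals by the \emph{same} shifted recursion $V_{i}=a_{i}V_{i-1}-V_{i-2}$ forces every $3\times 3$ diamond to have linearly dependent columns, hence rank at most $2$, and the unimodular rule (which follows from $\det M_{i}=1$) gives rank exactly $2$. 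With that sentence added, your argument matches the literature the paper cites.
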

The condition \eqref{mateq} is established in \cite{Cox} in an equivalent form, using continuants, in the case of friezes over positive real numbers. It was also recovered in this case in \cite{BeRe}. The arguments can be extended straightforward in the case of tame friezes over any unital ring. The condition \eqref{mateq} is also established in \cite{CuHo} for tame friezes over subsets of $\C$ and again the arguments can be extended straightforward in the case of tame friezes over arbitrary unital ring.

\section{Friezes and the moduli space $\cM_{0,n}$}\label{mon}

\subsection{Definition of $\cM_{0,n}$}\label{monq}
We fix a ground field $\K$. The field will be later specified to the field of complex numbers $\C$ or to the finite field $\F_{q}$ with $q$ elements.
A variety $V(\K)$ defined over $\K$ will be denoted simply $V$ when no confusion occurs, e.g 
$\pP^{1}=\pP^{1}(\K)$, $\GL_{2}=\GL_{2}(\K)$.
The classical moduli space of genus 0 curves with $n$ distinct marked points is
$$
\mathcal{M}_{0,n}\simeq\{ (v_{1}, v_{2}, \ldots, v_{n})\in (\pP^{1})^{n}, \;v_{i}\not=v_{j},\,\forall i\not=j\}/\PGL_{2}.
$$
We consider a completion of this space by allowing repetitions of points as long as they are not cyclically next to each other
$$
\cM_{0,n}=\{ (v_{1}, v_{2}, \ldots, v_{n})\in (\pP^{1})^{n}, \;v_{i}\not=v_{i+1}, \,\forall i\}/\PGL_{2},
$$
The indices are considered modulo $n$, i.e. $v_{1}=v_{n+1}\not=v_{n}$.

We often consider the configuration space used in the definition of $
\cM_{0,n}$; we define
$$
\cC_{n}=\{ (v_{1}, v_{2}, \ldots, v_{n})\in (\pP^{1})^{n}, \;v_{i}\not=v_{i+1}, \, \forall i\}
$$
where again the indices are considered modulo $n$.

\subsection{The subvariety of $\cM^{+}_{0,2m}$}

The varieties $\cM_{0,n}$ have different properties according to the parity of $n$. 
In the case $n=2m$ we will need to consider the algebraic subvariety of $\cM_{0,2m}^{+}$ that we define below. 
We first introduce two subspaces of $\cC_{2m}$
$$
\cC_{2m}^{\pm}:=\left\{(v_{1},\ldots, v_{2m})\in \cC_{2m} \Big| \frac{(v_{1}-v_{2})(v_{3}-v_{4})\cdots (v_{2m-1}-v_{2m})}{(v_{2}-v_{3})(v_{4}-v_{5})\cdots (v_{2m}-v_{1})}=-(\pm 1)\right\}
$$
and then define
$$\cM_{0,2m}^{\pm}:=\cC_{2m}^{\pm}/\PGL_{2}.$$
These are algebraic subvarieties of $\cM_{0,2m}$ of codimension 1.

\begin{rem}
These subvarieties are studied in \cite{Boa} as $\mathcal{M}^{\text{Sibuya}}_{2m}(\pm1)$, see also \cite{Sib}.
\end{rem}

\subsection{Geometric realizations of tame friezes}

Coxeter's friezes have deep connections with geometry \cite{Cox}, \cite{Cox3}. 
The link with the moduli space $\mathcal{M}_{0,n}$ was explained in \cite{MGOTaif} and then further generalized in higher dimension in \cite{MGOST}. 

The realization of friezes as geometric objects is the key ingredient in order to achieve the goal of counting friezes over a finite field.

In the case of odd $n$ one has the following known result.

\begin{theo}[{\cite{MGOST}}]\label{frizmod} When $n\geq 3$ is odd,
tame friezes of width $n-3$ over $\K$ form an algebraic subvariety of $\K^{n}$ isomorphic to $\cM_{0,n}(\K)$.
\end{theo}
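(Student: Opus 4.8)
The plan is to exhibit mutually inverse morphisms between the variety $\cF$ of tame friezes of width $n-3$ and $\cM_{0,n}$, the key device being the South-East diagonals of a frieze read as vectors in $\K^2$. First I would record the coordinate description: by Theorem \ref{crit} a frieze is determined by its first row, so $\cF=\{(a_1,\dots,a_n)\in\K^n : M_n\cdots M_1=-\id\}$ with $M_i=\left(\begin{smallmatrix} a_i & -1\\ 1 & 0\end{smallmatrix}\right)$, an algebraic subvariety of $\K^n$. The bridge to configurations is the recursion \eqref{recfri} read as a three-term relation in $\K^2$: one builds a sequence $(V_i)_{i\in\Z}$ with $V_{i+1}=a_{i+1}V_i-V_{i-1}$ and $\det(V_i,V_{i+1})=1$, whose minors $\det(V_i,V_j)$ recover all the entries of the frieze. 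In this language the closing condition \eqref{mateq} is exactly the monodromy relation $V_{i+n}=-V_i$, and conversely any normalized anti-periodic sequence yields a frieze via $a_i=\det(V_{i-1},V_{i+1})$.

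Next I would define $\Phi:\cF\to\cM_{0,n}$. Given a frieze, form $(V_i)$ as above and set $v_i=[V_i]\in\pP^1$. Since $\det(V_i,V_{i+1})=1\neq0$ we have $v_i\neq v_{i+1}$, and anti-periodicity gives $v_{i+n}=v_i$, so $(v_1,\dots,v_n)\in\cC_n$. The only freedom in building $(V_i)$ is the choice of initial basis, which changes the whole sequence by a fixed element of $\SL_2$; hence the class $[v_1,\dots,v_n]\in\cM_{0,n}$ is well defined, and it depends polynomially on the $a_i$, so $\Phi$ is a morphism. For the inverse $\Psi:\cM_{0,n}\to\cF$ I would, starting from a suitable representative of a configuration, lift each $v_i$ to a vector $V_i$ normalized by $\det(V_i,V_{i+1})=1$ and extended by $V_{i+n}=-V_i$, then read off $a_i=\det(V_{i-1},V_{i+1})$; the linear dependence of $V_{i-1},V_i,V_{i+1}$ in $\K^2$ forces the recursion \eqref{recfri}, and anti-periodicity forces \eqref{mateq}, so $(a_i)\in\cF$. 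One then checks that $\Phi$ and $\Psi$ are inverse to each other, giving the desired isomorphism of varieties.

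The main obstacle — and the only place where the parity of $n$ intervenes — is to justify that $\Psi$ is well defined, i.e. that every configuration admits a normalized lift with monodromy exactly $-\id$, unique up to an overall sign. Solving the normalization $\det(V_i,V_{i+1})=1$ around the cycle determines the scalings $\lambda_i$ recursively up to a single quadratic constraint $\lambda_1^2=c$, where $c$ is an explicit ratio of the initial determinants $\det(\tilde V_i,\tilde V_{i+1})$, and the anti-periodic closing $\det(V_n,V_1)=-1$ fixes the sign of $c$. Replacing the representative of the configuration by $g\cdot$it for $g\in\GL_2$ multiplies every such determinant by $\det(g)$, hence multiplies $c$ by $\det(g)^{\pm1}$, and I would check that for $n$ odd the total exponent of $\det(g)$ in $c$ is nonzero — precisely because an odd cycle makes the numbers of numerator and denominator factors differ by one — so that $c$ can be rescaled into a square and the lift made to exist. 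Uniqueness up to sign follows because, for odd $n$, the only $n$-periodic rescalings preserving all the normalizations are the global signs $V_i\mapsto\pm V_i$, which leave each $a_i=\det(V_{i-1},V_{i+1})$ unchanged. I expect the bookkeeping of these signs and of the monodromy to be the delicate part; it is also exactly what breaks when $n$ is even, where the analogous invariant $c$ cannot be normalized away and one is forced onto the codimension-one subvariety $\cM^{+}_{0,n}$.
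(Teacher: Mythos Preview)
Your proposal is correct and follows essentially the same route as the paper's Section~\ref{pftheoodd}: lift a configuration to an antiperiodic sequence in $\K^2$ with constant consecutive determinants, read off the $a_i$ from the three-term recursion, and verify that this inverts the diagonal map from friezes to configurations. The one tactical difference is that the paper leaves the common value of $\det(V_i,V_{i+1})$ as a free parameter $c$ and chooses $c$ so that the closing constraint $\lambda_1^2=(\text{ratio of the }d_i)\cdot c$ becomes solvable, whereas you normalize that value to $1$ and instead exploit the $\GL_2$-action on representatives to rescale the obstruction into a square; both devices use the oddness of $n$ in exactly the same way, and (as the paper itself notes) what is actually established here is the bijection, the algebraic-isomorphism statement being deferred to \cite{MGOST}.
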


The above theorem is the particular case ($k=1$) of \cite[Thm 3.4.1]{MGOST}.
The set of tame friezes of width $n-3$ is viewed as the subvariety of $\K^{n}$ of dimension $n-3$ defined by the polynomial equations of \eqref{mateq} (note that the matrices involved in \eqref{mateq} belong to $\SL_{2}$ so that one gets three independent equations). The space $\cM_{0,n}(\K)$ is identified with a quotient of a Grassmannian by a torus action using Gelfand-MacPherson's correspondence~\cite{GeMc}.

For our purpose, we only need a one-to-one correspondence between the set of tame friezes and the moduli space (we do not need the isomorphism of algebraic varieties). For completeness we explain the one-to-one correspondence in Section \ref{pftheoodd}.\\

In the case of even $n$ the situation is different. We need to consider friezes up to rescaling of the first row. More precisely $\K^{*}$ acts on the space of tame friezes with odd width by sending the frieze defined by the cycle of the first row
$(a_{1}, \ldots, a_{n})$ to the frieze defined by $(\l a_{1}, \frac{1}{\l} a_{2},  \ldots, \l a_{n-1} , \frac{1}{\l}a_{n})$ for $\l\in\K^{*}$.  Also, some elements in $\cM_{0,n}$ do not correspond to friezes. We need to restrict to  $\cM^{+}_{0,n}$. One gets the following.

\begin{thm}\label{frizmodeven} When $n\geq 3$ is even, the set of
tame friezes of width $n-3$ over $\K$, modulo the action of $\K^{*}$, is in one-to-one correspondence with $\cM^{+}_{0,n}(\K)$.
\end{thm}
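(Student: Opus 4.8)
The plan is to realize a tame frieze as an antiperiodic sequence of vectors on the projective line and to recognize the closing-up condition as the defining equation of $\cM^{+}_{0,2m}$. First I would attach to a frieze with first row $(a_{1},\dots ,a_{n})$, $n=2m$, the sequence $(W_{i})_{i\in\Z}$ of vectors in $\K^{2}$ satisfying the vector analogue of \eqref{recfri}, namely $W_{i+1}=a_{i+1}W_{i}-W_{i-1}$, with an initial frame normalized so that $\det(W_{i},W_{i+1})=1$. Since $\det(W_{i+1},W_{i+2})=\det(W_{i+1},-W_{i})=\det(W_{i},W_{i+1})$, this determinant is constant along the recursion, so one normalization suffices, and the entries are recovered as $a_{i}=\det(W_{i-1},W_{i+1})$. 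The matrix criterion \eqref{mateq} is then equivalent to the antiperiodicity $W_{i+n}=-W_{i}$. Projecting to $\pP^{1}$ gives points $w_{i}=[W_{i}]$ with $w_{i}\neq w_{i+1}$ and $w_{i+n}=w_{i}$, hence a configuration $(w_{0},\dots ,w_{n-1})\in\cC_{n}$; the residual $\SL_{2}$-freedom in the initial frame acts on the points through $\PGL_{2}$, so we obtain a well-defined point of $\cM_{0,n}$.

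The core step is the normalization analysis, which I would run in reverse to get simultaneously that the image lands in $\cM^{+}_{0,2m}$ and that the map is surjective onto it. Starting from a configuration in $\cC_{2m}$ with arbitrary lifts $W_{i}^{0}$ and antiperiodic extension $W_{i+n}^{0}=-W_{i}^{0}$, I look for rescalings $W_{i}=\mu_{i}W_{i}^{0}$ (with $\mu_{i+n}=\mu_{i}$) solving $\mu_{i}\mu_{i+1}\det(W_{i}^{0},W_{i+1}^{0})=1$. Solving this chain recursively expresses every $\mu_{i}$ through $\mu_{0}$, and the single closing-up constraint $\mu_{n}=\mu_{0}$ becomes, once the sign from $W_{n}=-W_{0}$ in the last determinant is inserted, exactly
\[
\frac{(w_{0}-w_{1})(w_{2}-w_{3})\cdots (w_{2m-2}-w_{2m-1})}{(w_{1}-w_{2})(w_{3}-w_{4})\cdots (w_{2m-1}-w_{0})}=-1,
\]
which, after relabeling $v_{i}=w_{i-1}$, is the equation cutting out $\cC^{+}_{2m}$. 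Thus the frieze map lands in $\cM^{+}_{0,2m}$, and conversely any configuration satisfying this equation admits a normalized antiperiodic lift and hence arises from the frieze $a_{i}=\det(W_{i-1},W_{i+1})$. This frieze is tame because the array $[\det(W_{i},W_{j})]$ has rank at most $2$, forcing every $3\times3$ diamond \eqref{tam} to have rank $2$, and it satisfies \eqref{mateq} by antiperiodicity.

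It remains to identify the fibers of this map with $\K^{*}$-orbits. A direct $2\times2$ computation gives the conjugation identity $M_{\l a}M_{\l^{-1}b}=\mathrm{diag}(\l,1)\,M_{a}M_{b}\,\mathrm{diag}(\l,1)^{-1}$, from which the action $(a_{i})\mapsto(\l^{(-1)^{i+1}}a_{i})$ conjugates the monodromy in \eqref{mateq} by a diagonal matrix; it therefore preserves \eqref{mateq} and alters the $W_{i}$ only by an alternating diagonal transformation, i.e. by $\PGL_{2}$ on the points, so the map factors through friezes modulo $\K^{*}$. For injectivity, if two friezes yield the same point of $\cM^{+}_{0,2m}$ then their normalized vectors satisfy $W_{i}'=c_{i}\,\tilde g\,W_{i}$ for a fixed $\tilde g\in\GL_{2}$ and scalars $c_{i}$; imposing $\det(W_{i}',W_{i+1}')=1=\det(W_{i},W_{i+1})$ forces $c_{i}c_{i+1}=(\det\tilde g)^{-1}$, whose solution is alternating, and substituting into $a_{i}'=\det(W_{i-1}',W_{i+1}')$ yields $a_{i}'=\l^{(-1)^{i+1}}a_{i}$ with $\l=c_{0}^{2}\det\tilde g$. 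Since $c_{0}$ and $\det\tilde g$ independently range over $\K^{*}$, the scalar $\l$ is arbitrary in $\K^{*}$, so the two friezes differ precisely by the $\K^{*}$-action. Combining surjectivity with this description of the fibers yields the asserted bijection.

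The delicate point I expect to fight with is the sign bookkeeping in the closing-up constraint: the minus sign produced by the antiperiodicity $W_{n}=-W_{0}$ is exactly what turns solvability into the ratio $-1$ and thereby selects $\cM^{+}_{0,2m}$ rather than $\cM^{-}_{0,2m}$; this is also where the parity of $n$ enters, since only for even $n$ do the two cyclic parity classes of determinants separate cleanly. A closely related subtlety is verifying that the residual lift scaling (which on its own yields only $\l\in(\K^{*})^{2}$) combines with the $\PGL_{2}$-ambiguity — genuinely $\PGL_{2}$ and not merely $\PSL_{2}$ — to realize the full $\K^{*}$-action, and not just its square subgroup.
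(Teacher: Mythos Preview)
Your argument is correct and follows essentially the same route as the paper: build antiperiodic lifts with constant consecutive determinants, identify solvability of the rescaling system with the defining equation of $\cC^{+}_{2m}$, and match the remaining free parameter in the lift with the $\K^{*}$-action on first rows. Your presentation is in places more explicit than the paper's---you spell out the tameness check via the rank of $[\det(W_i,W_j)]$, and you make precise why the fiber is the full $\K^{*}$ rather than $(\K^{*})^{2}$ by tracking $\lambda=c_0^2\det\tilde g$ with $\det\tilde g$ ranging over $\K^{*}$---whereas the paper leaves these as immediate analogs of the odd case; but the underlying mechanism is identical.
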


We prove the above theorem in Section \ref{pftheoeven}.

\subsection{A one-to-one correspondence between tame friezes and $\cM_{0,n}(\K)$, case of odd $n$}\label{pftheoodd}
We explain the bijection of Theorem \ref{frizmod} between the tame friezes of width $w=n-3$ with odd $n$ over $\K$ and the set 
$\cM_{0,n}(\K)=\cC_{n}(\K)/\PGL_{2}(\K)$. The construction was given for $\K=\R$ in \cite{MGOST} and $\K=\C$ in \cite{MGblms}. Here the construction is slightly modified in order to work over an arbitrary field $\K$.

Choose $v=(v_{1}, \ldots, v_{n})$ an element of $\cC_{n}$. 
\begin{lem}\label{lemlift}
There exists $(V_{1}, \ldots, V_{n})$ a lift of $v$ with $V_{i}\in \K^{2}$ such that 
$$
\det(V_{1}, V_{2})=\det(V_{2},V_{3})=\cdots=\det(V_{n-1},V_{n})=\det(V_{n},-V_{1}).
$$
\end{lem}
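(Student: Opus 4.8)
The plan is to fix an arbitrary initial lift and then look for the desired lift among its rescalings, turning the statement into a purely multiplicative system of equations on the scaling factors, which I will solve using the parity of $n$.

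First I would pick, for each $i$, any nonzero vector $W_i\in\K^2$ representing $v_i\in\pP^1$, and set $d_i:=\det(W_i,W_{i+1})$ for $1\le i\le n-1$ and $d_n:=\det(W_n,-W_1)$. Since $v_i\neq v_{i+1}$ cyclically (including $v_n\neq v_1$), the vectors in each consecutive pair are linearly independent, so every $d_i$ lies in $\K^{*}$. Any lift of $v$ has the form $V_i=\lambda_i W_i$ with $\lambda_i\in\K^{*}$, and by bilinearity of the determinant the requirement of the lemma becomes
$$\lambda_1\lambda_2\, d_1=\lambda_2\lambda_3\, d_2=\cdots=\lambda_{n-1}\lambda_n\, d_{n-1}=\lambda_n\lambda_1\, d_n.$$
Writing $e_i:=\lambda_i\lambda_{i+1}d_i$ (indices mod $n$, with $\lambda_{n+1}:=\lambda_1$), I must produce $\lambda_1,\dots,\lambda_n\in\K^{*}$ with $e_1=\cdots=e_n$.

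Next I would reformulate the pairwise equalities. Cancelling the common nonzero factor $\lambda_{i+1}$ from $e_i=e_{i+1}$ gives the recursion
$$\lambda_{i+2}=\lambda_i\,\frac{d_i}{d_{i+1}},\qquad i\in\Z/n,$$
with $d_{n+1}:=d_1$. Here is the one place where the hypothesis on $n$ enters: because $n$ is \emph{odd}, the shift $i\mapsto i+2$ is a single $n$-cycle on $\Z/n$, namely $1\to 3\to\cdots\to n\to 2\to\cdots\to n-1\to 1$. Hence, starting from a free value $\lambda_1:=1$ and applying the recursion around this cycle determines all of $\lambda_2,\dots,\lambda_n$ as products of ratios of the $d_i$, in particular all nonzero. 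The only thing to verify is consistency upon returning to the starting index: the accumulated factor over one full cycle is $\prod_{i=1}^{n}\frac{d_i}{d_{i+1}}$, which telescopes cyclically to $1$ (regardless of the sign hidden in $d_n$). Thus the recursion defines a genuine function $\lambda\colon\Z/n\to\K^{*}$; and since the full cycle uses each relation $e_i=e_{i+1}$ exactly once, all of them hold simultaneously, giving $e_1=\cdots=e_n$. Setting $V_i=\lambda_iW_i$ then yields the claimed lift.

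The main (and essentially only) obstacle is precisely the closure of this cyclic recursion, and it is resolved for free by the odd-$n$ single-cycle structure together with the telescoping product. It is worth anticipating that this is exactly where the even case will diverge: for $n=2m$ the shift $i\mapsto i+2$ breaks into the two orbits of odd and even indices, so closure is imposed separately on each orbit and amounts to the nonzero constraint $\frac{d_1d_3\cdots d_{2m-1}}{d_2d_4\cdots d_{2m}}=\pm1$, i.e.\ the cross-ratio condition cutting out $\cC_{2m}^{\pm}$. This is why the even case requires restricting to $\cM^{+}_{0,n}$ and quotienting by the $\K^{*}$-action, whereas here the free parameter $\lambda_1$ simply records that the lift is unique up to a global scalar.
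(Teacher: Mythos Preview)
Your proof is correct and follows the same overall strategy as the paper: pick an arbitrary lift, look for the desired one among its rescalings, and solve the resulting multiplicative system in the $\lambda_i$'s. The difference is in how the system is closed. The paper keeps the common determinant value as a free parameter $c$, solves for $\lambda_2,\dots,\lambda_n$ in terms of $\lambda_1$ and $c$, and is left with a final constraint $\lambda_1^{2}=(\text{ratio of the }d_i)\cdot c$, which it satisfies by \emph{choosing} $c$ appropriately. You instead eliminate the common value from the outset by passing to the consecutive equalities $e_i=e_{i+1}$, obtain the two-step recursion $\lambda_{i+2}=\lambda_i\,d_i/d_{i+1}$, and observe that for odd $n$ the map $i\mapsto i+2$ is a single $n$-cycle whose closure condition telescopes to $1$. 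Your route is slightly slicker in that it avoids the apparent square-root issue over a general field; the paper's route has the minor advantage of displaying explicitly the one-parameter freedom (in $c$, equivalently in $\lambda_1$) in the solution. Both arguments use the oddness of $n$ at exactly the same point, and your closing remark about the even case matches the paper's subsequent analysis.
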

\begin{proof}
Consider first an arbitrary lift $(V'_{1}, \ldots, V'_{n})$ of $v$ in $\K^{2}$. We want to extend the sequence by antiperiodicity, $V'_{i+n}=-V'_{i}$, and rescale the points in order to have constant consecutive determinants. Introducing the scalar parameter $c$ and the unknown scalars $\lambda_{i}$, $1\leq i \leq n$, we consider the system given by the $n$ conditions $\det(\lambda_{i}V'_{i}, \lambda_{i+1}V'_{i+1})=c$, $1\leq i \leq n$. 

This leads to the following 
\begin{equation}\label{sys}
\left\{
\begin{array}{ccc}
\lambda_{1}\lambda_{2}  &=   &d_{1}c   \\
\lambda_{2}\lambda_{3}  &=   &d_{2}c   \\
\vdots  &   & \vdots  \\
\lambda_{n-1}\lambda_{n}  &=   &d_{n-1}c   \\
\lambda_{n}\lambda_{1}  &=   &d_{n}c   
\end{array}
\right.
\end{equation}
where $d_{i}=1/\det(V'_{i},V'_{i+1})$ are well defined non zero scalars since $v_{i}\not=v_{i+1}$ in the initial configuration.

Using $\lambda_{1}$ as a parameter to solve the system one gets the expressions
\begin{equation}\label{syssol}
\l_{2i}=\frac{d_{1}d_{3}\cdots d_{2i-1}}{d_{2}d_{4}\cdots d_{2i-2}}\frac{c}{\l_{1}}, \quad 
\l_{2i+1}=\frac{d_{2}d_{4}\cdots d_{2i}}{d_{1}d_{3}\cdots d_{2i-1}}\l_{1}
\end{equation}
for all $1\leq i \leq (n-1)/2$, using all but the last equation of \eqref{sys}.

 The fact that $n$ is odd is now crucial to solve the above system. 
In order to have the full system satisfied one needs to add the last equation and gets the final relation 
$$
\lambda_{1}^{2}=\frac{d_{1}d_{3}\ldots d_{n}}{d_{2}d_{4}\ldots d_{n-1}}c.
$$
It is clear that there is a choice of $c\in \K^{*}$ leading to a solution $(\lambda_{1}, \ldots, \lambda_{n})$ for the system. 
The sequence defined by $V_{i}=\lambda_{i}V'_{i}$ has the desired property.
\end{proof}
From a lift $(V_{i})$ of $v$ given by Lemma \ref{lemlift}, by expanding $V_{i}$ in the basis $(V_{i-1}, V_{i-2})$, one gets $n$ coefficients $(a_{1}, \ldots, a_{n})$ defined by 
\begin{equation}\label{eqcoeff}
V_{i}=a_{i}V_{i-1}-V_{i-2}, \; 1\leq i \leq n
\end{equation}
where we set $V_{0}=-V_{n}$ and $V_{-1}=-V_{n-1}$. 
The coefficients in front of $V_{i-2}$ are all equal to $-1$ since consecutive determinants are constant.
\begin{lem}\label{lemuniqcoef}
The coefficients $(a_{1}, \ldots, a_{n})$ defined by \eqref{eqcoeff} only depend on the class of $v$ modulo $\PGL_{2}(\K)$ and characterize the class.
\end{lem}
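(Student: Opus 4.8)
The statement has two halves: that the tuple $(a_{1},\ldots,a_{n})$ defined by \eqref{eqcoeff} is independent of every choice made in its construction and $\PGL_{2}(\K)$-invariant, and that conversely it pins down the class of $v$. The plan is to treat these separately, using in the first half the fact that for odd $n$ the lift of Lemma~\ref{lemlift} is essentially unique, and in the second half that the recursion \eqref{eqcoeff} can be reconstructed from its coefficients once two initial vectors are fixed.

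For well-definedness I would compare two lifts $(V_{i})$ and $(V'_{i})=(\nu_{i}V_{i})$ of the same $v$, each with constant consecutive determinants, say with values $c$ and $c'$. Imposing constancy for the rescaled lift gives $\nu_{i}\nu_{i+1}=c'/c$ for all $i$ (indices mod $n$, including the wrap-around term $\det(V_{n},-V_{1})$), and hence $\nu_{i}=\nu_{i+2}$. This is exactly where the parity enters: since $n$ is odd, $i\mapsto i+2$ is a single $n$-cycle on $\Z/n\Z$ (as $\gcd(2,n)=1$), so all $\nu_{i}$ collapse to one global scalar $\nu$. Because \eqref{eqcoeff} is homogeneous of degree one in the $V_{j}$, replacing $V_{i}$ by $\nu V_{i}$ leaves the $a_{i}$ unchanged, so the construction does not depend on the chosen lift. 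For $\PGL_{2}$-invariance I would take $G\in\GL_{2}(\K)$ representing $g$ and note that $(GV_{i})$ is a lift of $g\cdot v$ with constant consecutive determinants $\det(G)\,c$; applying $G$ to \eqref{eqcoeff} and using linearity shows $(GV_{i})$ yields the very same $a_{i}$. Combining the two observations gives that $(a_{1},\ldots,a_{n})$ depends only on the class of $v$ modulo $\PGL_{2}(\K)$.

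For the converse, suppose $v$ and $v'$ produce the same $(a_{i})$, with lifts $(V_{i})$ and $(V'_{i})$ from Lemma~\ref{lemlift}. The constancy of consecutive determinants together with $v_{i}\neq v_{i+1}$ forces $\det(V_{0},V_{1})=\det(V_{n},-V_{1})\neq 0$ and likewise for the primed lift, so $(V_{0},V_{1})$ and $(V'_{0},V'_{1})$ are bases of $\K^{2}$. Let $G\in\GL_{2}(\K)$ be the unique linear map with $GV_{0}=V'_{0}$ and $GV_{1}=V'_{1}$. Applying $G$ to $V_{i}=a_{i}V_{i-1}-V_{i-2}$ and using that $(V'_{i})$ obeys the same recursion with the same coefficients, a one-line induction yields $GV_{i}=V'_{i}$ for all $i$ (and the wrap $V_{0}=-V_{n}$ is automatically respected). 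Hence $g=[G]\in\PGL_{2}(\K)$ carries $v$ to $v'$, so the two configurations lie in the same class and the $a_{i}$ characterize it.

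I expect the main obstacle to be the well-definedness step, precisely the collapse of the rescaling freedom to a single global scalar: this is the only place where the oddness of $n$ is genuinely used, and it is also what the construction would lose for even $n$ (there $i\mapsto i+2$ has two orbits, and the two independent scalars on the odd and even positions rescale the $a_{i}$ by the $\K^{*}$-action appearing in Theorem~\ref{frizmodeven}, which is why that case must be quotiented). The injectivity half, by contrast, should be a routine linear-algebra induction once the initial pairs are recognized as bases.
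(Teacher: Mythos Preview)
Your argument is correct and follows essentially the same route as the paper. The paper merges your two well-definedness steps into one by taking $w$ to be any configuration $\PGL_{2}$-equivalent to $v$ (including $w=v$), writing $W_{i}=\mu_{i}MV_{i}$, and deducing $\mu_{i+1}=\mu_{i-1}$ from constancy of consecutive determinants; for the converse it uses $(V_{1},V_{2})$ rather than $(V_{0},V_{1})$ as the initial pair, but otherwise the induction is identical.
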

\begin{proof}
We need to check that the coefficients do not depend on the choice of the lift in Lemma \ref{lemlift} and that they are invariant under the action of $\PGL_{2}(\K)$.
Let $w$ be a configuration of $\cC_{n}$ equivalent to $v$ modulo $\PGL_{2}(\K)$ (this of course includes the case $w=v$). Consider lifts 
$(W_{1}, \ldots, W_{n})$ and $(V_{1}, \ldots, V_{n})$ for $w$ and $v$ as in Lemma \ref{lemlift}. 
There exist a matrix $M\in \GL_{2}(\K)$ and scalars $\mu_{1}, \ldots, \mu_{n}$ in $\K^{*}$ such that 
$W_{i}=\mu_{i}MV_{i}$ for all $1\leq i \leq n$. Since the consecutive determinants are constant in both sequences one immediately gets that $\mu_{i}\mu_{i+1}=\mu_{i-1}\mu_{i}$ for all $1\leq i \leq n$, and hence  $\mu_{i}=\mu_{1}$  for all $1\leq i \leq n$ (here again we use the fact that $n$ is odd). Thus $W_{i}=\mu_{1}MV_{i}$ for all $1\leq i \leq n$ and this leads to the same coefficients in the expansions \eqref{eqcoeff}.

Now, if two lifts $(W_{1}, \ldots, W_{n})$ and $(V_{1}, \ldots, V_{n})$ for two configurations $w$ and $v$ lead to the same coefficients \eqref{eqcoeff} then the matrix $M$ defined by $MV_{1}=W_{1}$ and $MV_{2}=W_{2}$ will satisfy $MV_{i}=W_{i}$ for all $1\leq i \leq n$. Thus $w$ and $v$ are equivalent modulo $\PGL_{2}(\K)$.
\end{proof}
\begin{lem}\label{lemquid}
The coefficients $(a_{1}, \ldots, a_{n})$ satisfy \eqref{mateq}.
\end{lem}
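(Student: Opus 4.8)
The plan is to encode the three–term recursion \eqref{eqcoeff} as a product of $2\times2$ matrices and then exploit the antiperiodic boundary condition $V_{0}=-V_{n}$, $V_{-1}=-V_{n-1}$ together with the invertibility of the ``frame'' formed by two consecutive lifts. Concretely, I would assemble consecutive vectors into the $2\times2$ matrices $P_{i}:=\left(V_{i}\,\middle|\,V_{i-1}\right)$ with columns $V_{i}$ and $V_{i-1}$, for $0\le i\le n$, and rewrite \eqref{eqcoeff} as $P_{i}=P_{i-1}A_{i}$, where $A_{i}=\begin{pmatrix} a_{i}&1\\ -1&0\end{pmatrix}$. A one-line check shows that the first column of $P_{i-1}A_{i}$ is $a_{i}V_{i-1}-V_{i-2}=V_{i}$ and the second column is $V_{i-1}$, exactly as required, so the recursion becomes right multiplication by $A_{i}$.

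Iterating gives $P_{n}=P_{0}\,A_{1}A_{2}\cdots A_{n}$. The boundary convention $V_{0}=-V_{n}$, $V_{-1}=-V_{n-1}$ translates into $P_{0}=-P_{n}$, whence $P_{n}=-P_{n}\,A_{1}A_{2}\cdots A_{n}$. The frame $P_{n}=\left(V_{n}\,\middle|\,V_{n-1}\right)$ is invertible, since $\det P_{n}=\det(V_{n},V_{n-1})$ is, up to sign, the common value of the consecutive determinants from Lemma~\ref{lemlift}, which is nonzero because $v_{i}\neq v_{i+1}$. Cancelling $P_{n}$ then yields $A_{1}A_{2}\cdots A_{n}=-\id$.

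Finally I would reconcile this with the precise form of \eqref{mateq}, which is the product taken in the \emph{reverse} order and with the matrices $M_{i}=\begin{pmatrix} a_{i}&-1\\ 1&0\end{pmatrix}$. Since $M_{i}=A_{i}^{\mathsf{T}}$, transposing the identity $A_{1}\cdots A_{n}=-\id$ gives $M_{n}M_{n-1}\cdots M_{1}=\left(A_{1}\cdots A_{n}\right)^{\mathsf{T}}=-\id$, which is precisely \eqref{mateq}. Note that this argument nowhere uses the parity of $n$; oddness entered only in Lemmas~\ref{lemlift} and~\ref{lemuniqcoef}.

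The computation is essentially routine linear algebra; the only genuine care needed is bookkeeping. The main potential pitfall is convention-matching: one must fix the column ordering so that \eqref{eqcoeff} becomes right multiplication by $A_{i}$, confirm $P_{0}=-P_{n}$ from the antiperiodic extension, and observe that the matrices appearing in \eqref{mateq} are the transposes of the $A_{i}$ in the opposite order. I expect this transpose-and-reversal step to be the main source of possible sign or ordering errors, so I would double-check it against the case $i=1$, where $V_{1}=a_{1}V_{0}-V_{-1}=-a_{1}V_{n}+V_{n-1}$.
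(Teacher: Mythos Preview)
Your proof is correct and follows essentially the same idea as the paper: package two consecutive lifts into a $2\times 2$ matrix, rewrite \eqref{eqcoeff} as a matrix recursion, iterate, and cancel the invertible frame using the antiperiodicity $V_{0}=-V_{n}$, $V_{-1}=-V_{n-1}$. The only cosmetic difference is that the paper places $V_{i}$ and $V_{i-1}$ as \emph{rows}, so that \eqref{eqcoeff} becomes left multiplication by $M_{i}=\begin{pmatrix} a_{i}&-1\\ 1&0\end{pmatrix}$ directly and the product $M_{n}\cdots M_{1}$ appears in the correct order without your final transpose-and-reverse step.
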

\begin{proof}
By writing the coordinates of the vectors $V_{i}$ and $V_{i-1}$ in the rows of a $2\times2$ matrix one obtains from \eqref{eqcoeff}
$$
\begin{pmatrix}
-V_{0}\\[4pt]-V_{-1}
\end{pmatrix}=
\begin{pmatrix}
V_{n}\\[4pt]V_{n-1}
\end{pmatrix}=
\left(
\begin{array}{cc}
a_n&-1\\[4pt]
1&0
\end{array}
\right)
\cdots
\left(
\begin{array}{cc}
a_{2}&-1\\[4pt]
1&0
\end{array}
\right)
\left(
\begin{array}{cc}
a_{1}&-1\\[4pt]
1&0
\end{array}
\right)\begin{pmatrix}
V_{0}\\[4pt]V_{-1}
\end{pmatrix}.
$$
Hence \eqref{mateq} holds.
\end{proof}
The above lemma and Theorem \ref{crit} imply that the coefficients $(a_{1}, \ldots, a_{n})$ defined by \eqref{eqcoeff} determine a unique tame frieze of width $n-3$. Hence we have constructed an injective map from $\cM_{0,n}(\K)$ to the set of tame friezes of width $n-3$. 

Conversely, starting form a tame frieze of width $n-3$, the corresponding point in $\cM_{0,n}(\K)$ is obtained from the sequence of $n$ vectors
\begin{equation}\label{seqfri}
(V_{1}, \ldots, V_{n})=\begin{pmatrix}
1\\[4pt]a_{1}
\end{pmatrix}, \begin{pmatrix}
a_{2}\\[4pt]a_{1}a_{2}-1
\end{pmatrix}, 
\begin{pmatrix}
a_{2}a_{3}-1\\[4pt]*
\end{pmatrix}, 
\cdots,
\begin{pmatrix}
*\\[4pt]1
\end{pmatrix}, 
\begin{pmatrix}
1\\[4pt]0
\end{pmatrix},
\begin{pmatrix}
0\\[4pt]-1
\end{pmatrix}
\end{equation}
whose components are given by the first and second diagonal of the frieze.

\subsection{Correspondence between tame friezes and $\cM^{+}_{0,n}(\K)$, case of even $n$}\label{pftheoeven}
In this section we prove Theorem \ref{frizmodeven}.
When $n$ is even there is no bijection between the tame friezes of width $w=n-3$ and the set 
$\cM_{0,n}(\K)$. We need to adapt the strategy used in the case of odd $n$ and use the subspace $\cM_{0,n}^{+}(\K)$.

\begin{lem}
When $n$ is even Lemma \ref{lemlift} holds if and only if $v\in \cC^{+}_{n}$.
\end{lem}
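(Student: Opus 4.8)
The plan is to revisit the proof of Lemma \ref{lemlift} and identify precisely where the hypothesis ``$n$ odd'' was used, then show that for even $n$ the obstruction that appears is exactly the condition defining $\cC_n^{+}$. Recall that in the odd case one set up the system \eqref{sys} of $n$ equations $\lambda_i\lambda_{i+1}=d_i c$ and solved for the $\lambda_{2i}$ and $\lambda_{2i+1}$ using all but the last equation, obtaining the expressions \eqref{syssol} in terms of the free parameter $\lambda_1$ and the auxiliary scalar $c$. Substituting these into the remaining (last) equation $\lambda_n\lambda_1=d_n c$ produced a compatibility condition. When $n$ is odd this condition reads $\lambda_1^2=(d_1 d_3\cdots d_n/d_2 d_4\cdots d_{n-1})\,c$, which can always be solved by choosing $c$ appropriately. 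The key point is that when $n=2m$ is even, the parity of the indices reorganizes the substitution so that $\lambda_1$ cancels out entirely, and the last equation instead becomes a constraint purely on the $d_i$ (equivalently on the configuration $v$), with $c$ also cancelling.

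Concretely, I would carry out the substitution in the even case explicitly. Writing $n=2m$, the solved expressions give $\lambda_{2m}$ in terms of $\lambda_1$ and $c$, and imposing the final equation $\lambda_{2m}\lambda_1=d_{2m}c$ forces a relation of the form
\begin{equation}\label{evencompat}
\frac{d_1 d_3\cdots d_{2m-1}}{d_2 d_4\cdots d_{2m-2}}\,c = d_{2m}\,c,
\end{equation}
after cancelling $\lambda_1$ (the $c$'s also cancel, so the constraint is independent of the rescaling parameter). This simplifies to $d_1 d_3\cdots d_{2m-1} = d_2 d_4\cdots d_{2m}$, i.e. $\prod_{i \text{ odd}} d_i = \prod_{i \text{ even}} d_i$. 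Substituting $d_i=1/\det(V'_i,V'_{i+1})$ turns this into
$$
\frac{\det(V'_2,V'_3)\det(V'_4,V'_5)\cdots\det(V'_{2m},V'_1)}{\det(V'_1,V'_2)\det(V'_3,V'_4)\cdots\det(V'_{2m-1},V'_{2m})}=1.
$$
I would then note that the ratio of determinants equals the corresponding cross-ratio-type product of coordinate differences $(v_i-v_{i+1})$ up to the scaling factors of the lift, which cancel telescopically because each $V'_i$ appears once in a numerator determinant and once in a denominator determinant. Hence the constraint is exactly the defining relation of $\cC_{2m}^{+}$ (the sign $-(\pm1)$ with the $+$ choice, giving value $+1$), so Lemma \ref{lemlift} holds precisely when $v\in\cC_{2m}^{+}$.

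The two directions then follow: if $v\in\cC_{2m}^{+}$ the compatibility \eqref{evencompat} is satisfied, so \eqref{syssol} together with any admissible $\lambda_1$ and $c$ yields a genuine solution of the full system \eqref{sys} and hence the desired lift; conversely, if a lift with constant consecutive determinants exists, then reading off the $\lambda_i$ relating it to an arbitrary lift forces \eqref{evencompat}, which is equivalent to $v\in\cC_{2m}^{+}$. The main obstacle I anticipate is the bookkeeping in the telescoping cancellation of the lift's scaling factors when translating the determinant identity into the sign-normalized form defining $\cC_{2m}^{+}$: one must track carefully that the defining product uses differences $(v_i-v_{i+1})$ of projective coordinates, that these differences agree with $\det(V'_i,V'_{i+1})$ only up to per-point scalars, and that those scalars indeed cancel so the value is $+1$ rather than some other nonzero constant. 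Verifying this cancellation rigorously, and confirming the sign convention matches the $\cC_{2m}^{+}$ (rather than $\cC_{2m}^{-}$) case, is where the real care is needed; the rest is a direct rerun of the odd-$n$ argument with the parity bookkeeping adjusted.
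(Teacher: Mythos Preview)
Your approach is exactly the paper's: rerun the proof of Lemma~\ref{lemlift}, observe that for even $n$ the last equation of \eqref{sys} becomes a compatibility constraint $d_1d_3\cdots d_{2m-1}=d_2d_4\cdots d_{2m}$ independent of $\lambda_1$ and $c$, and identify this constraint with membership in $\cC_{2m}^{+}$.

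One concrete slip to fix, precisely at the spot you flagged as delicate. In the setup of Lemma~\ref{lemlift} the sequence is extended \emph{antiperiodically}, so $d_{2m}=1/\det(V'_{2m},V'_{2m+1})=1/\det(V'_{2m},-V'_1)=-1/\det(V'_{2m},V'_1)$. Hence your displayed ratio of determinants should equal $-1$, not $+1$. This is what matches $\cC_{2m}^{+}$, whose defining equation has right-hand side $-(+1)=-1$; with your sign as written you would land in $\cC_{2m}^{-}$. The paper makes exactly this remark (``the choice of $V_{n+1}=-V_1$ makes a difference of sign''). Once that sign is corrected, your telescoping argument for the cancellation of the per-point scalars is fine, and the proof goes through.
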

\begin{proof}
We fix $v\in \cC^{+}_{n}$ and proceed as in the proof of  Lemma \ref{lemlift}. 
We are led to the system \eqref{sys}. In general, for even $n$ the system has no solution.
But with the choice of $v\in \cC^{+}_{n}$, by definition, every arbitrary lift of $v$ to $(V'_{i})_{1\leq i \leq n}$ will satisfy the relation
$$d_{1}d_{3}\cdots d_{n-1}=d_{2}d_{4}\cdots d_{n}$$
where $d_{i}=1/\det(V'_{i},V'_{i+1})$ (with $V'_{n+1}=-V'_{1}$). This makes the system \eqref{sys} consistent and solutions for $(\lambda_{1}, \ldots, \lambda_{n})$ are given by \eqref{syssol} where $\l_{1}$ is any non zero parameter. Hence the rescaling 
$V_{i}=\lambda_{i}V'_{i}$ provides us with a antiperiodic lift with constant consecutive determinants.

Conversely, if such a lift exists the element $v$ does belong to $\cC^{+}_{n}$.
(Note that the choice of $V_{n+1}=-V_{1}$ makes a difference of sign in the equation defining $\cC^{+}_{n}$.)
\end{proof}

For $v\in \cC^{+}_{n}$ and a lift $(V_{i})$ given by Lemma \ref{lemlift} one gets $n$ coefficients $(a_{1}, \ldots, a_{n})$ defined by the recursion \eqref{eqcoeff}. Now these coefficients depend on the choice of the lift, but another choice of lift will lead to the coefficients $(\l a_{1}, \frac{1}{\l} a_{2},  \ldots, \l a_{n-1} , \frac{1}{\l}a_{n})$ for some $\l\not=0$. 
We immediately obtain the following analogs of Lemma \ref{lemuniqcoef} and Lemma \ref{lemquid}.
\begin{lem}\label{lemuniqcoef2}
The family of coefficients $(\l a_{1}, \frac{1}{\l} a_{2},  \ldots, \l a_{n-1} , \frac{1}{\l}a_{n})$, $\l\not=0$, defined by \eqref{eqcoeff}, only depends on the class of $v$ modulo the action of $\PGL_{2}(\K)$ and characterize the class.
\end{lem}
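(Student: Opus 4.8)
The plan is to run the argument of Lemma~\ref{lemuniqcoef} almost verbatim, the single new phenomenon being that for even $n$ the relation which forced the rescaling scalars to be globally constant now splits into two parity classes; tracking this splitting through the recursion \eqref{eqcoeff} is exactly what will reproduce the prescribed $\K^{*}$-action and show that the family of coefficients, rather than a single coefficient tuple, is the right invariant.

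First I would check that the family depends only on the $\PGL_{2}(\K)$-class. Let $w\in\cC^{+}_{n}$ be equivalent to $v$ modulo $\PGL_{2}(\K)$ and choose lifts $(V_{i})$ and $(W_{i})$ with constant consecutive determinants as in Lemma~\ref{lemlift}. As in the odd case there are $M\in\GL_{2}(\K)$ and nonzero scalars $\mu_{1},\ldots,\mu_{n}$ with $W_{i}=\mu_{i}MV_{i}$, extended antiperiodically so that $\mu_{i+n}=\mu_{i}$. Comparing the consecutive determinants, which are constant in both sequences, yields $\mu_{i}\mu_{i+1}=\mu_{i-1}\mu_{i}$, hence $\mu_{i+2}=\mu_{i}$ for all $i$. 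Here the parity of $n$ enters: for even $n$ this does not force all $\mu_{i}$ to coincide but only gives two values, $\mu_{i}=\mu_{1}$ for odd $i$ and $\mu_{i}=\mu_{2}$ for even $i$.

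Next I would substitute $W_{i}=\mu_{i}MV_{i}$ into the defining recursion \eqref{eqcoeff} for the coefficients $\hat a_{i}$ attached to $(W_{i})$. Since $\mu_{i}=\mu_{i-2}$, the coefficient of $W_{i-2}$ stays equal to $-1$, and after applying $M^{-1}$ and using $V_{i}=a_{i}V_{i-1}-V_{i-2}$ one reads off $\hat a_{i}=(\mu_{i}/\mu_{i-1})\,a_{i}$. Setting $\l:=\mu_{1}/\mu_{2}$ this gives $\hat a_{i}=\l a_{i}$ for odd $i$ and $\hat a_{i}=\tfrac{1}{\l}a_{i}$ for even $i$, so the two lifts produce the same family $(\l a_{1},\tfrac{1}{\l}a_{2},\ldots,\l a_{n-1},\tfrac{1}{\l}a_{n})$. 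Taking $M=\id$ and $w=v$ covers in particular the independence from the choice of lift for a fixed $v$, so the family is a well-defined function of the class of $v$.

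Finally I would prove that the family characterizes the class. If $v,w\in\cC^{+}_{n}$ give the same family, then by the previous step I may choose the lift of $w$ so that the coefficient sequences coincide exactly; defining $M$ by $MV_{1}=W_{1}$ and $MV_{2}=W_{2}$, an induction on the common recursion \eqref{eqcoeff} forces $MV_{i}=W_{i}$ for all $i$ (the antiperiodic values $V_{0}=-V_{n}$, $V_{-1}=-V_{n-1}$ being handled automatically), so $v$ and $w$ are $\PGL_{2}(\K)$-equivalent. The only delicate point is the third paragraph above: one must confirm that the two-parameter freedom peculiar to even $n$ acts on the coefficients precisely as the rescaling $(\l a_{i})$, so that passing to the quotient by $\K^{*}$ restores the bijectivity that fails at the level of individual friezes.
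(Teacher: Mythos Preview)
Your proof is correct and follows precisely the route the paper intends: the paper does not spell out a separate argument for Lemma~\ref{lemuniqcoef2} but simply declares it an ``analog of Lemma~\ref{lemuniqcoef}'', and what you have written is exactly that adaptation, correctly isolating the one change for even $n$ (the scalars $\mu_i$ now depend on the parity of $i$, yielding the $\K^{*}$-rescaling rather than a single coefficient tuple).
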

\begin{lem}\label{lemquid2}
The coefficients $(\l a_{1}, \frac{1}{\l} a_{2},  \ldots, \l a_{n-1} , \frac{1}{\l}a_{n})$ satisfy \eqref{mateq}.
\end{lem}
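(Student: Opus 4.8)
The plan is to observe that the derivation of \eqref{mateq} carried out in the proof of Lemma \ref{lemquid} never used the parity of $n$. That argument relied only on the recursion \eqref{eqcoeff} together with the antiperiodicity $V_{0}=-V_{n}$, $V_{-1}=-V_{n-1}$ of the lift, and both of these are available for \emph{any} lift supplied by the even-$n$ version of Lemma \ref{lemlift} (i.e. for any $v\in\cC^{+}_{n}$). So the first step is simply to rerun that computation verbatim for such a lift.

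Concretely, I would record the vectors $V_{i}\in\K^{2}$ as the rows of the $2\times2$ matrices $P_{i}=\begin{pmatrix}V_{i}\\[2pt]V_{i-1}\end{pmatrix}$. The recursion \eqref{eqcoeff} then reads $P_{i}=M_{i}P_{i-1}$ with $M_{i}=\begin{pmatrix}a_{i}&-1\\[2pt]1&0\end{pmatrix}$, so that $P_{n}=M_{n}\cdots M_{1}\,P_{0}$. The antiperiodicity gives $P_{n}=-P_{0}$, whence $(M_{n}\cdots M_{1}+\id)\,P_{0}=0$. Since the lift has constant nonzero consecutive determinants, $\det P_{0}=\det(V_{0},V_{-1})\neq0$, so $P_{0}\in\GL_{2}(\K)$ may be cancelled, leaving $M_{n}\cdots M_{1}=-\id$, which is exactly \eqref{mateq}.

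Finally I would check that the whole one-parameter family $(\l a_{1},\frac{1}{\l}a_{2},\ldots,\l a_{n-1},\frac{1}{\l}a_{n})$ is covered, rather than a single tuple. Each such tuple is the collection of coefficients attached by \eqref{eqcoeff} to a rescaled lift $\tilde V_{i}=\mu_{i}V_{i}$; the requirement that consecutive determinants stay constant forces $\mu_{i}$ to depend only on the parity of $i$, and taking $\mu_{i}=\l$ for $i$ odd and $\mu_{i}=1$ for $i$ even replaces $a_{i}$ by $\l^{\pm1}a_{i}$ precisely as in the statement. Because every such $\tilde V$ still obeys the recursion and the antiperiodicity relations, the computation of the previous paragraph applies unchanged to each member of the family. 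I do not anticipate a genuine obstacle: the only point needing care is verifying that this alternating rescaling is compatible with the boundary identities $\tilde V_{0}=-\tilde V_{n}$, $\tilde V_{-1}=-\tilde V_{n-1}$, which is exactly where the evenness of $n$ enters.
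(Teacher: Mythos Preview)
Your proposal is correct and follows essentially the same approach as the paper: the paper introduces Lemma~\ref{lemquid2} as an immediate analog of Lemma~\ref{lemquid}, having already noted that each tuple in the $\l$-family arises from some lift satisfying \eqref{eqcoeff} with the required antiperiodicity, so the matrix computation of Lemma~\ref{lemquid} carries over verbatim. Your write-up simply spells out those details, including the observation that the alternating rescaling is compatible with antiperiodicity precisely because $n$ is even.
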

We conclude that  an element of $\cM^{+}_{0,n}=\cC^{+}_{n}/\PGL_{2}$ defines a unique tame frieze  up to the rescaling 
$(\l a_{1}, \frac{1}{\l} a_{2},  \ldots, \l a_{n-1} , \frac{1}{\l}a_{n})$, $\l\not=0$, in the first row, and \textit{vice versa} by \eqref{seqfri}.

\section{Counting points over $\F_{q}$}\label{count}

\subsection{Counting points  in  $\cM_{0,n}(\F_{q})$}\label{count1}
Let us recall some useful notation and basic facts. One has $[n]_{q}:=|\pP^{n-1}(\F_{q})|=1+q+q^{2}+\ldots+q^{n-1}=\frac{q^{n}-1}{q-1}$ and $|\PGL_{2}(\F_{q})|=q(q^{2}-1)$.

\begin{thm}\label{contmon} Let $n\geq 2$. The number of points in  $\cM_{0,n}(\F_{q})$ is
$$
| \cM_{0,n}(\F_{q})|=\begin{cases}
[m]_{q^{2}}, &\text{if } n =2m+1,\\[4pt]
1+q[m-1]_{q^{2}}, &\text{if } n=2m.
\end{cases}
$$
\end{thm}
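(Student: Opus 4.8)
The plan is to count the configuration space $\cC_n(\F_q)$ directly and then pass to the quotient $\cM_{0,n}=\cC_n/\PGL_2$, carefully controlling the few points at which the $\PGL_2$-action fails to be free. First I would establish the cardinality of $\cC_n(\F_q)$, which I expect to be the content of Lemma \ref{lemcn}. The constraint $v_i\neq v_{i+1}$ (cyclically) says exactly that $(v_1,\ldots,v_n)$ is a proper coloring of the cycle graph $C_n$ with the $q+1=|\pP^1(\F_q)|$ available ``colors''. The chromatic polynomial of $C_n$ at $k$ colors is $(k-1)^n+(-1)^n(k-1)$, so with $k=q+1$ one obtains
$$|\cC_n(\F_q)|=q^n+(-1)^n q.$$
One can equally prove this by a transfer-matrix argument or a short recursion on $n$.

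Next I would analyze the $\PGL_2(\F_q)$-action. Since $\PGL_2(\F_q)$ acts sharply $3$-transitively on $\pP^1(\F_q)$, any configuration having at least three distinct coordinates has trivial stabilizer, so the action is free there. Because $v_i\neq v_{i+1}$ already forces at least two distinct values (for $n\geq 2$), the only configurations with nontrivial stabilizer are those taking exactly two values, and such a configuration must strictly alternate $(a,b,a,b,\dots)$ with $a\neq b$. This closes up cyclically only when $n$ is even, which is the source of the parity dichotomy. For odd $n=2m+1$ the cycle admits no proper $2$-coloring, so every configuration has at least three distinct values and the action is free; hence
$$|\cM_{0,n}(\F_q)|=\frac{|\cC_n(\F_q)|}{|\PGL_2(\F_q)|}=\frac{q^{2m+1}-q}{q(q^2-1)}=\frac{q^{2m}-1}{q^2-1}=[m]_{q^2}.$$

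For even $n=2m$ the alternating two-value configurations are indexed by ordered pairs $(a,b)$ with $a\neq b$, so there are $(q+1)q$ of them; by $2$-transitivity they form a single $\PGL_2$-orbit, whose stabilizer is the pointwise stabilizer of a pair of points, a torus of order $q-1$ (consistently, the orbit has size $(q+1)q=|\PGL_2(\F_q)|/(q-1)$). Removing this one orbit leaves $|\cC_{2m}(\F_q)|-(q+1)q=q^{2m}-q^2$ configurations on which the action is free, so
$$|\cM_{0,2m}(\F_q)|=1+\frac{q^{2m}-q^2}{q(q^2-1)}=1+\frac{q\,(q^{2(m-1)}-1)}{q^2-1}=1+q[m-1]_{q^2}.$$

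The main obstacle is the even case: one must correctly locate the non-free locus, verify that the two-value configurations constitute a single orbit, and compute its stabilizer order before dividing the complementary (free) part by $|\PGL_2(\F_q)|$. Once this stratification is in place the remainder is the elementary $q$-integer arithmetic recorded above. As a sanity check, the boundary case $m=1$ (i.e. $n=2,3$) can be verified directly: $\cM_{0,2}$ and $\cM_{0,3}$ are both single points, matching $1+q[0]_{q^2}=1$ and $[1]_{q^2}=1$.
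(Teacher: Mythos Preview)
Your proposal is correct and follows essentially the same route as the paper: compute $|\cC_n(\F_q)|=q^n+(-1)^n q$ (the paper does this via the recursion $c_{n+2}=(q-1)c_{n+1}+qc_n$ rather than the chromatic polynomial, but you note this alternative yourself), then quotient by $\PGL_2$, separating off the single orbit of alternating two-value configurations in the even case before dividing the free part by $|\PGL_2(\F_q)|$. Your explicit appeals to sharp $3$-transitivity and the torus stabilizer make precise exactly what the paper's proof uses implicitly.
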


\begin{proof}
We first count points in the space $
\cC_{n}$.
\begin{lem}\label{lemcn}
Denote by $c_{n}=|\cC_{n}(\F_{q})|$. One has
\begin{equation}\label{formcn}
c_{n}=q^{n}+(-1)^{n}q, \;n\geq 2.
\end{equation}
\end{lem}
\begin{proof}
One easily computes the first values $c_{2}=(q+1)q$, $c_{3}=(q+1)q(q-1)$, which are obtained as the choice of two, resp. three, distinct points among the $q+1$ points of $\pP^{1}(\F_{q})$.
Consider a configuration $(v_{1}, \ldots, v_{n+1},v_{n+2})$ in $\cC_{n+2}$. If $v_{1}=v_{n+1}$ then the configuration can be viewed as a configuration $(v_{1}, \ldots, v_{n})\in \cC_{n}$ with an extra point $v_{n+2}\not=v_{1}$. There are $qc_{n}$ configurations of this type. If $v_{1}\not=v_{n+1}$ then the configuration can be viewed as a configuration $(v_{1}, \ldots, v_{n+1})\in \cC_{n+1}$ with an extra point $v_{n+2}\not\in\{v_{1},v_{n+1}\}$. There are $(q-1)c_{n+1}$ configurations of this type. 
This establishes the recurrence relation
\begin{equation*}
c_{n+2}=(q-1)c_{n+1}+qc_{n}, \;n\geq 2,
\end{equation*}
This linear recurrence of order 2 can be easily solved and leads to the formula \eqref{formcn}.
\end{proof}
When a configuration contains three distinct points it will lead to $q(q^{2}-1)$ other configurations modulo the action of $\PGL_{2}(\F_{q})$. When $n=2m+1$ the configurations in $\cC_{n}$ always contain three distinct points, therefore
$$
| \cM_{0,2m+1}(\F_{q})|=c_{2m+1}/(q(q^{2}-1))=(q^{2m}-1)/(q^{2}-1)=[m]_{q^{2}}.
$$
When $n=2m$, the $(q+1)q$ configurations of the form $(v_{1},v_{2}, v_{1},v_{2}, \ldots, v_{1}, v_{2})$, $v_{1}\not=v_{2}$ are all the same modulo $\PGL_{2}(\F_{q})$. All other configurations contain three distinct points. Therefore
$$
| \cM_{0,2m}(\F_{q})|=(c_{2m}-q(q+1))/(q(q^{2}-1))+1=1+q[m-1]_{q^{2}}.
$$

\end{proof}
Lemma \ref{lemcn} is the only result needed in Appendix \ref{part}.

\subsection{Counting points in $\cM^{+}_{0,2m}(\F_{q})$}
We consider $\K=\F_{q}$ and want to count the points in  the variety $\cM^{+}_{0,2m}$ defined in the previous section. Different cases appear according to the characteristic of the field is 2 or not and according to $m$ is even or odd. Recall the definition of the $q$-binomial coefficient 
${m \choose 2 }_{q}=\frac{(q^{m}-1)(q^{m-1}-1)}{(q-1)(q^{2}-1)}$, which symplify to a polynomial in $q$.

\begin{thm}\label{chat}
For all $m\geq 1$
$$
|\cM^{+}_{0,2m}(\F_{q})|=
\begin{cases}
{m \choose 2 }_{q}, &\text{if } \Char(\F_{q}) \not=2 \; \text{and } m \text{ even},\\[12pt]
{m \choose 2 }_{q}+[m-1]_{q}+1, &\text{otherwise.}\\[6pt]
\end{cases}
$$
\end{thm}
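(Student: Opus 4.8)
The plan is to count points of $\cM^{+}_{0,2m}=\cC^{+}_{2m}/\PGL_{2}$ by the same orbit-counting strategy used for Theorem \ref{contmon}, but now restricted to the codimension-one subvariety $\cC^{+}_{2m}$. First I would establish a formula for $c^{+}_{2m}:=|\cC^{+}_{2m}(\F_{q})|$, the number of configurations $(v_{1},\ldots,v_{2m})$ with $v_{i}\neq v_{i+1}$ cyclically and satisfying the cross-ratio-type constraint
$$
\frac{(v_{1}-v_{2})(v_{3}-v_{4})\cdots (v_{2m-1}-v_{2m})}{(v_{2}-v_{3})(v_{4}-v_{5})\cdots (v_{2m}-v_{1})}=-1.
$$
The generic $\PGL_{2}$-orbit on $\cC^{+}_{2m}$ has size $q(q^{2}-1)$ (three distinct points, trivial stabiliser), so the main term will be $(c^{+}_{2m}-\text{(degenerate orbits)})/(q(q^{2}-1))$, and then I must add back one point for each degenerate orbit.

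For the count of $c^{+}_{2m}$, I would fix a normalisation using the $\PGL_{2}$-action — or better, work directly with the configuration space and peel off one coordinate at a time. Concretely, I expect to set up a transfer-matrix / recursive argument analogous to Lemma \ref{lemcn}: think of the alternating product as a running quantity and count the number of ways to extend a partial configuration while tracking the accumulated ratio. Since the defining equation is a single multiplicative constraint valued in $\F_{q}^{*}$, summing over the $q-1$ possible values of the partial ratio (using a character sum or a direct recursion on the "twisted" configuration numbers) should produce a linear recurrence whose solution is a clean polynomial in $q$; matching it against the $q$-binomial ${m \choose 2}_{q}$ is then bookkeeping. The parity of $m$ and $\Char(\F_{q})$ enter through whether the fully degenerate two-point configurations $(v_{1},v_{2},v_{1},v_{2},\ldots)$ actually lie in $\cC^{+}_{2m}$: for such a configuration the alternating ratio equals $(-1)^{m}$ (each factor $(v_{1}-v_{2})/(v_{2}-v_{1})=-1$), so it satisfies the $-1$ condition precisely when $(-1)^{m}=-1$, i.e. $m$ odd, \emph{unless} $\Char(\F_{q})=2$ where $-1=1$ collapses the distinction. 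This is exactly the source of the case split in the statement.

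The hard part will be the careful enumeration of the \emph{degenerate} orbits — the configurations that do not contain three distinct points — together with the correct orbit-size for each, since these are what the extra summand $[m-1]_{q}+1$ records. I would identify the degenerate configurations as those taking only two distinct values on $\pP^{1}$, necessarily in the alternating pattern $v_{1},v_{2},v_{1},v_{2},\ldots$ (any other two-valued cyclic pattern with no equal neighbours forces an alternation when the length is even), classify them modulo $\PGL_{2}$, check which satisfy the $\cC^{+}_{2m}$-constraint according to the parity/characteristic dichotomy above, and compute their contribution. Once $c^{+}_{2m}$ and the degenerate correction are in hand, substituting and simplifying $\tfrac{c^{+}_{2m}-(\text{correction})}{q(q^{2}-1)}+(\text{number of degenerate orbits})$ into the two cases, and recognising the result as ${m \choose 2}_{q}$ respectively ${m \choose 2}_{q}+[m-1]_{q}+1$, finishes the proof. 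I would keep an eye on the boundary value $m=1$, where $\cM^{+}_{0,2}$ degenerates entirely, to confirm the formula specialises correctly.
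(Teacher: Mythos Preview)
Your strategy is the paper's: compute $c^{+}_{2m}:=|\cC^{+}_{2m}(\F_{q})|$ by a recursion, identify the degenerate $\PGL_{2}$-orbits, and divide. The paper's recursion is $c^{\pm}_{n}=c_{n-1}+qc^{\mp}_{n-2}$ for even $n\geq 4$ (cased on whether $v_{1}=v_{n-1}$); note the sign flip, which makes the unrolled expression
\[
c^{+}_{2m}=c_{2m-1}+qc_{2m-3}+\cdots+q^{m-2}c_{3}+q^{m-1}c_{2}^{(-)^{m-1}}
\]
end in either $c_{2}^{+}=c_{2}$ or $c_{2}^{-}=0$ depending on the parity of $m$ (in characteristic $2$ one has $\cC_{2}^{+}=\cC_{2}^{-}=\cC_{2}$). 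Dividing term by term yields $\sum_{k=1}^{m-1}q^{k-1}[m-k]_{q^{2}}$, which the paper then rewrites as ${m\choose 2}_{q}$.

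One correction to your diagnosis: the extra summand $[m-1]_{q}+1$ is \emph{not} a tally of degenerate orbits. You already observed correctly that there is exactly one such orbit (the alternating two-point pattern), contributing $+1$ when present. The term $[m-1]_{q}$ comes instead from the last term $q^{m-1}c_{2}$ of the unrolled recursion, present only in the ``otherwise'' case: after removing the $q(q+1)$ degenerate configurations and dividing, $(q^{m-1}c_{2}-q(q+1))/(q(q^{2}-1))=[m-1]_{q}$. So the case split changes the value of $c^{+}_{2m}$ itself (by $q^{m}(q+1)$), not merely the degenerate-orbit bookkeeping. If you proceed assuming $c^{+}_{2m}$ is the same in both cases and try to manufacture $[m-1]_{q}$ extra degenerate orbits, you will not find them.
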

\begin{proof}
We introduce the cardinals
$$
c_{n}^{\pm}=|\cC_{n}^{\pm}(\F_{q})|.
$$
Recall that $c_{n}=|\cC_{n}(\F_{q})|$ is given by Equation \eqref{formcn}.
\begin{lem}\label{lemcpm}
For all $n\geq 4$, even, one has
$
c_{n}^{\pm}=c_{n-1}+qc^{\mp}_{n-2}.
$
\end{lem}
\begin{proof}
Let us choose $v=(v_{1},\ldots, v_{n})\in \cC^{+}_{n}$. Assume first that $v_{1}\not=v_{n-1}$. 
This means that $v'=(v_{1},\ldots, v_{n-1})$ defines an element of $\cC_{n-1}$. For such an element $v'$ there is a unique choice of $v_{n}$ that makes $(v',v_{n})$ belong to $\cC^{+}_{n}$. Indeed, 
choose any lift of $v'$ to a tuple $(V_{1}, V_{2}, \ldots, V_{n-1})$ of points in $\K^{2}$ and denote $d_{i}=\det(V_{i},V_{i+1})$ for $i=1\ldots n-2$. We want to add $V_{n}$ such that the following condition 
$$d_{1}d_{3}\cdots d_{n-3}\det(V_{n-1}, V_{n})=-d_{2}d_{4}\cdots d_{n-2}\det(V_{n},V_{1})$$
holds. Since $v_{1}\not=v_{n-1}$, this
 leads to a unique choice of $V_{n}$, up to scalar multiple. 
 We conclude that there exist $c_{n-1}$ elements  $v$ in $\cC^{+}_{n}$ such that $v_{1}\not=v_{n-1}$.
 
 Assume now $v_{1}=v_{n-1}$. The sequence $v''=(v_{1},\ldots, v_{n-2})$  is an  element in $\cC_{n-2}$. Choose any lift of $v''$ to $(V_{1}, V_{2}, \ldots, V_{n-2})$ and denote $d_{i}=\det(V_{i},V_{i+1})$ for $i=1\ldots n-3$ and $d_{n-2}=\det(V_{n-2}, V_{1})$. 
 We want to add $V_{n-1}=\lambda V_{1}$ and $V_{n}$ in order to have
 $$d_{1}d_{3}\cdots d_{n-3}\det(V_{n-1}, V_{n})=-d_{2}d_{4}\cdots d_{n-3}\det(V_{n-2}, V_{n-1})\det(V_{n},V_{1}).$$
 Substituting $V_{n-1}=\lambda V_{1}$ and cancelling $\l$'s we obtain the simplified condition
 $$d_{1}d_{3}\cdots d_{n-3}\det(V_{1}, V_{n})=-d_{2}d_{4}\cdots d_{n-2}\det(V_{n},V_{1}),$$
which can hold if and only if $d_{1}d_{3}\cdots d_{n-3}=d_{2}d_{4}\cdots d_{n-2}$, i.e. if and only if $v''\in \cC_{n-2}^{-}$. And from this condition we see that  if $v''\in \cC_{n-2}^{-}$ any choice of $v_{n}$ in $\pP^{1}(\F_{q})\setminus\{v_{1}\}$ will make $(v'',v_{1}, v_{n})$ belong to~$\cC^{+}_{n}$.
 We conclude that there exist $qc_{n-2}^{-}$ elements  $v$ in $\cC^{+}_{n}$ such that $v_{1}=v_{n-1}$.

We have established the recursion $
c_{n}^{+}=c_{n-1}+qc^{-}_{n-2}$. Same arguments hold when taking opposite signs so we also have 
$c_{n}^{-}=c_{n-1}+qc^{+}_{n-2}$.
\end{proof}
From the above lemma one obtains
$$
c^{+}_{2m}=c_{2m-1}+qc_{2m-3}+q^{2}c_{2m-5}+\ldots+q^{m-2}c_{3}+q^{m-1}c_{2}^{(-)^{m-1}}
$$
where $(-)^{m-1}=+$ or $-$ if $m$ is odd or even respectively. So we will need to separate the cases 
$m$ odd or even. Also if $\Char(\F_{q})=2$ one has $c^{+}_{n}=c^{-}_{n}$ since $\cC^{+}_{n}=\cC^{-}_{n}$. So we will need to separate the cases 
$\Char(\F_{q})=2$  and $\Char(\F_{q})>2$.

\begin{lem}\label{lemchat}
For all $m\geq 1$
$$
|\cM^{+}_{0,2m}(\F_{q})|=
\begin{cases}
\sum_{k=1}^{m-1}q^{k-1}[m-k]_{q^{2}}, &\text{if } \Char(\F_{q}) \not=2 \; \text{and } m \text{ even},\\[8pt]
\sum_{k=1}^{m-1}q^{k-1}[m-k]_{q^{2}}+[m-1]_{q}+1, &\text{otherwise.}\\[6pt]
\end{cases}
$$
\end{lem}
\begin{proof}
Assume $\Char(\F_{q})>2$ and $n=2m$ with even $m$.
We have $\cC_{2}^{-}=\emptyset$ so that $c^{-}_{2}=0$ and we obtain from the recursion
$$
c^{+}_{2m}=c_{2m-1}+qc_{2m-3}+q^{2}c_{2m-5}+\ldots+q^{m-2}c_{3}.
$$
The space $\cC^{+}_{2m}$ does not contain the elements of the form $(v_{1},v_{2}, v_{1},v_{2}, \ldots, v_{1},v_{2})$. Thus elements of $\cC^{+}_{2m}$ always contain three distinct points and the orbits under $\PGL_{2}$ will have same cardinality $|\PGL_{2}|$ . 
One obtains
$$
|\cC^{+}_{n}/\PGL_{2}|=c^{+}_{2m}/(q^{3}-q)
=\sum_{k=1}^{m-1}q^{k-1}|\cM_{2m-2k+1}|=\sum_{k=1}^{m-1}q^{k-1}[m-k]_{q^{2}}.
$$
Assume $\Char(\F_{q})>2$ and $n=2m$ with odd $m$.
We have $\cC_{2}^{+}=\cC_{2}$ so that $c^{+}_{2}=c_{2}=q(q+1)$ and we obtain from the recursion
$$
c^{+}_{2m}=c_{2m-1}+qc_{2m-3}+q^{2}c_{2m-5}+\ldots+q^{m-2}c_{3}+q^{m-1}c_{2}.
$$
The space $\cC^{+}_{2m}$ does contain the elements of the form $(v_{1},v_{2}, v_{1},v_{2}, \ldots, v_{1},v_{2})$. These elements are all the same under the action of  $\PGL_{2}$.  The other elements of $\cC^{+}_{2m}$ always contain three distinct points and their orbits under $\PGL_{2}$ have same cardinality $q^{3}-q$. 
One obtains
\begin{eqnarray*}
|\cC^{+}_{n}/\PGL_{2}|&=&(c^{+}_{2m}-q(q+1))/(q^{3}-q)+1\\
&=&\sum_{k=1}^{m-1}q^{k-1}|\cM_{2m-2k+1}|+(q^{m-1}-1)q(q+1)/(q^{3}-q)+1\\
&=&\sum_{k=1}^{m-1}q^{k-1}[m-k]_{q^{2}}+[m-1]_{q}+1.
\end{eqnarray*}
Assume $\Char(\F_{q})=2$. In that case $c_{2}^{+}=c_{2}^{-}=c_{2}=q(q+1)$. The computations are the same as in the previous case, independently of $m$ is odd or even.
\end{proof}
Finally one gives alternative expressions in the formula of Lemme \ref{lemchat}.
One first notices that
\begin{eqnarray*}
q^{k-1}[m-k]_{q^{2}}+q^{k}[m-k-1]_{q^{2}}&=&q^{k-1}\frac{q^{2(m-k)}-1+q^{(2(m-k)-1)}-q}{q^{2}-1}\\[4pt]
&=&\frac{q^{2m-k-2}-q^{k-1}}{q-1}.
\end{eqnarray*}
In the sums one can group the terms for $k=2\ell-1$ and $k=2\ell$. If $m=2m'$ one obtains
\begin{equation*}
\begin{array}{rcl}\displaystyle
 \sum_{k=1}^{m-1}q^{k-1}[m-k]_{q^{2}} &=   &\displaystyle \frac{1}{q-1}\sum_{\ell=1}^{m'-1}(q^{2m-2\ell-1}-q^{2\ell-2})+q^{m-2}  \\[10pt]
  &=   & \displaystyle \frac{1}{q-1}(q^{m-1}[m']_{q^{2}}-[m']_{q^{2}}) \\[12pt]
  &  = &  [ m-1]_{q}[m']_{q^{2}}
\end{array}
\end{equation*}
and if $m=2m'+1$ one obtains
\begin{equation*}
\begin{array}{rcl}
\displaystyle \sum_{k=1}^{m-1}q^{k-1}[m-k]_{q^{2}} &=   & \displaystyle\frac{1}{q-1}\sum_{\ell=1}^{m'}(q^{2m-2\ell-1}-q^{2\ell-2})  \\[10pt]
  &=   &\displaystyle  \frac{1}{q-1}(q^{m}[m']_{q^{2}}-[m']_{q^{2}}) \\[12pt]
  &  = &  [ m]_{q}[m']_{q^{2}}
\end{array}
\end{equation*}
One observes that  in both cases $ \sum_{k=1}^{m-1}q^{k-1}[m-k]_{q^{2}}={ m \choose 2}_{q}$.
Theorem \ref{chat} is proved.
\end{proof}

\subsection{Counting tame friezes over $\F_{q}$}\label{friq}
A tame frieze of width $n-3$ is determined by $(a_{1}, \ldots, a_{n})$ satisfying \eqref{mateq}. It is clear that if $(a_{1}, \ldots, a_{n})$ satisfies \eqref{mateq} then so do $(a_{2}, \ldots, a_{n}, a_{1})$.
The corresponding friezes look the same up to an horizontal shift. We count friezes regardless symmetries, i.e. we count all the solutions of \eqref{mateq} considering that $(a_{1}, a_{2},\ldots, a_{n})$ and  $(a_{2}, \ldots, a_{n}, a_{1})$ give two different friezes whenever  $(a_{1}, a_{2},\ldots, a_{n})\not=(a_{2}, \ldots, a_{n}, a_{1})$.

It is  also clear that if $(a_{1}, \ldots, a_{n})$ satisfies \eqref{mateq} then so do $(a_{n}, a_{n-1}, \ldots, a_{1})$. The corresponding friezes look the same up to vertical reflection. They count for two different friezes whenever  $(a_{1}, a_{2},\ldots, a_{n})\not=(a_{n}, a_{n-1}, \ldots, a_{1})$. 

To count all the tame friezes, or equivalently the solutions of \eqref{mateq}, over  $\F_{q}$, we use the geometric realizations of the previous sections.\\

\noindent
\textbf{Proof of Theorem \ref{nbfriz}} (i) Let $n=2m+3$.
The construction in Section \ref{pftheoodd} gives a bijection between the set of tame friezes of width $n-3$ with odd $n$ and the set 
$\cM_{0,n}(\K)$. Theorem \ref{contmon} gives the number of elements in the sets when $\K=\F_{q}$.
Hence we get \eqref{f2m}.

(ii)
Let $n=2m$. 
By Theorem \ref{frizmodeven} 
every class in $\cC^{+}_{n}/\PGL_{2}$ leads to a $n$-tuple  $(a_{1}, \ldots, a_{n})$ and a family of friezes defined by the first rows $(\l a_{1}, \frac{1}{\l} a_{2},  \ldots, \l a_{n-1} , \frac{1}{\l}a_{n})$, for all $\l\not=0$. 

Assume $m$ even and $\Char(\F_{q})\not=2$. In this case 
one has $(a_{1}, \ldots, a_{n})\not=(0, \ldots, 0)$ because the elements in $\cC^{+}_{n}$ are not of the form $(v_{1},v_{2}, v_{1},v_{2}, \ldots, v_{1},v_{2})$. Therefore the corresponding family of friezes  contains exactly $(q-1)$ distinct friezes.
One thus obtains
$$
f_{2m-3}=(q-1)|\cM^{+}_{0,2m}|=(q-1){m \choose 2 }_{q},
$$
by Theorem \ref{chat}.

Assume $m$ odd and $\Char(\F_{q})\not=2$, or assume $\Char(\F_{q})=2$. In this case $\cC^{+}_{n}$
contains the elements of the form $(v_{1},v_{2}, v_{1},v_{2}, \ldots, v_{1},v_{2})$. They are all in the same class in $\cM^{+}_{0,2m}$ and this class gives the coefficients $(a_{1}, \ldots, a_{n})=(0, \ldots, 0)$ and leads to a unique frieze.
The other classes in $\cM^{+}_{0,2m}$ lead to families of friezes defined by $(\l a_{1}, \frac{1}{\l} a_{2},  \ldots, \l a_{n-1} , \frac{1}{\l}a_{n})$, $\l\not=0$ with $(a_{1}, \ldots, a_{n})\not=(0, \ldots, 0)$. These families contain $(q-1)$ distinct friezes.
Using Theorem \ref{chat} one thus obtains
\begin{eqnarray*}
f_{2m-3}&=&(q-1)\left(|\cM^{+}_{0,2m}|-1\right) +1
=(q-1)\left({m \choose 2 }_{q}+[m-1]_{q}\right) +1\\
&=&(q-1){m \choose 2 }_{q}+q^{m-1}.
\end{eqnarray*}
Theorem \ref{nbfriz} is proved.

\begin{rem}
The polynomial $(q-1){m \choose 2 }_{q}$ is a sum of odd powers of $q$ minus a sum of even powers of $q$. More precisely one has
$$
(q-1){m \choose 2 }_{q}=
\begin{cases}
 q^{2m-3}+q^{2m-5}+\ldots +q^{m+1}+q^{m-1}-q^{m-2}-q^{m-4}-\ldots-q^{2}-1,&\!\text{($m$ even)},\\[8pt]
q^{2m-3}+q^{2m-5}+\ldots +q^{m+2}+q^{m}-q^{m-3}-q^{m-5}-\ldots-q^{2}-1, &\!\text{($m$ odd)}.\\[1pt]
\end{cases}
$$

\end{rem}

\begin{rem}
It is known that friezes of width $n-3$ are related to the cluster algebra of type $\mA_{n-3}$ \cite{CaCh}. The results of Theorems \ref{contmon} and \ref{chat} agree with the results of \cite[Prop 3.2]{Cha} (see also \cite[Prop 4.10]{Cha2}).
\end{rem}
\subsection{Friezes of width 1 over finite fields}
Recall that the tameness condition \S\ref{tamsec} is defined in the frieze extended top and bottom with rows of 0's and $-1$'s. For simplicity we omit these rows in the arrays.

Let $\K=\F_{q}$
be a finite field of characteristic bigger than 2. Tame friezes of width 1 are exactly the following,$$
\begin{array}{cccccccccccccccccccccccc}
\cdots&&1&&1&& 1&&1&&1&&1&& \cdots\\
&\cdots&&2\a^{-1}&&\a&&2\a^{-1}&&\a&&2\a^{-1}&&\cdots&&\\
\cdots&&1&&1&& 1&&1&&1&&1&& \cdots\\
\end{array}
$$
 for all  $\a\not=0$.

Let $\K=\F_{q}$ 
be a finite field of characteristic 2. Tame friezes of width 1 are exactly the following
$$
\begin{array}{cccccccccccccccccccccccc}
\cdots&&1&&1&& 1&&1&&1&&1&& \cdots\\
&\cdots&&0&&\a&&0&&\a&&0&&\cdots&&\\
\cdots&&1&&1&& 1&&1&&1&&1&& \cdots\\
\end{array}
$$
with $\a\in\K$. If $\a\not=0$ the above array counts for two different friezes. 

Hence we have just established that
the number of tame friezes over $\F_{q}$ is $2q-1$ if the characteristic is 2 and $q-1$ otherwise, which agrees with the value of $f_{1}$ given in Theorem \ref{nbfriz}.

\subsection{Examples of friezes of width 2 over finite fields}
A frieze of width 2 is determined by 5 consecutive coefficients $(a_{1}, a_{2}, \ldots, a_{5})$ on the first row. Under a dihedral action, i.e. cyclic permutations $(a_{2}, a_{3}, \ldots, a_{1})$ and/or reversion $(a_{5}, a_{4}, \ldots, a_{1})$, a frieze can produce up to 10  friezes.

By Theorem \ref{nbfriz} there exist $1+q^{2}$ tame friezes of width 2 over $\F_{q}$.

For $q=2$ the five friezes contain the following fundamental domain
$$
\begin{array}{cccccccccccccccccccccccc}
1&& 1&&1&&1\\
&1&&1&&1\\
&&0&&0&\\
&&&1&
\end{array}
$$
and are defined by the coefficients $(1,1,1,0,0)$ on the first row, up to cyclic permutation.

For $q=4$, let $\F_{4}=\{0,1,\a, \b\}$ with $\b=1+\a=\a^{-1}$. The 17 friezes are the same as the five ones above for $\F_{2}$ together with the friezes containing one of the following three fundamental domains
$$
\begin{array}{cccccccccccccccccccccccc}
1&& 1&&1&&1\\
&\a&&0&&\b\\
&&1&&1&\\
&&&1&
\end{array}\qquad
\begin{array}{cccccccccccccccccccccccc}
1&& 1&&1&&1\\
&\a&&\a&&\a\\
&&\a&&\a&\\
&&&1&
\end{array}\qquad
\begin{array}{cccccccccccccccccccccccc}
1&& 1&&1&&1\\
&\b&&\b&&\b\\
&&\b&&\b&\\
&&&1&
\end{array}$$
Note that the domain on the left contributes to 10 friezes under dihedral action.

For $q=3$ the 10 friezes are obtained by cyclic permutations of $(2,1,0,1,2)$ and $(0,2,2,2,0)$.
For $q=5$ the 26 friezes are obtained by cyclic permutations and reversions of (2,1,3,1,2), (4,4,4,0,0,) (4,2,0,2,4) (1,4,4,3,0), (3,3,3,3,3).

\subsection{Friezes of width 3 over $\F_{2}$ and $\F_{3}$}
A frieze of width 3 is determined by 6 consecutive coefficients $(a_{1}, a_{2}, \ldots, a_{6})$ on the first row. Under a dihedral action, i.e. cyclic permutations $(a_{2}, a_{3}, \ldots, a_{1})$ and/or reversion $(a_{6}, a_{5}, \ldots, a_{1})$, a frieze can produce up to 12 friezes.

By Theorem \ref{nbfriz} there exist $q^{3}+q^{2}-1$ tame friezes of width 3 over $\F_{q}$.

For $q=2$, there are 11 tame friezes of width 3 given by the cycles $(1,0,1,0,0,0)$, $(1,1,0,1,1,0)$, $(1,1,1,1,1,1)$ and $(0,0,0,0,0,0)$. By dihedral action these cycles contribute respectively to 6, 3, 1 and 1 friezes.

For $q=3$, there are 35 tame friezes of width 3 given by the cycles $(0,0,0,2,0,1)$, $(1,1,0,2,2,0)$, $(1,0,2,1,0,2)$, $(1,0,1,0,1,0)$, $(2,0,2,0,2,0)$ and $(0,0,0,0,0,0)$.  By dihedral action these cycles contribute respectively to 12, 6, 6, 6, 2, 2  and 1 friezes.

\subsection{Combinatorial models}
Theorem \ref{nbfriz} gives the number of friezes over an arbitrary finite field but how to produce all the friezes (or all solutions of \eqref{mateq}) remains a natural open problem. 

When working over $\F_{p}$ with $p$ prime, the original construction of Conway-Coxeter \cite{CoCo} for friezes over $\Z_{\geq 0}$ by means of triangulations of polygons provide solutions to this problem by reducing modulo $p$. But not all the solutions.
A construction producing all the friezes over $\F_{2}$ and $\F_{3}$ has been given \cite{Mab}.

The construction of Conway-Coxeter \cite{CoCo} has been generalized to other cases \cite{Ovs3d}, \cite{Cun4}, \cite{CuHo}, \cite{HoJo3}.  These generalized models could inspire a model for friezes over finite  fields.\\

\noindent
\textbf{Open problem:} Find a combinatorial model to construct all friezes over $\F_{q}$.

\subsection{Numerology}
We compute the first terms of the sequence $(f_{n})$ given by Theorem \ref{nbfriz}:  
\begin{center}
$$
   \begin{array}{l|l|l}
&\Char(\F_{q})>2 & \Char(\F_{q})=2\\[4pt]
\hline
f_{1}&q-1 &2q-1\\[4pt]
f_{2}& q^{2}+1 &\text{idem}   \\[4pt]
f_{3}&q^{3}+q^{2}-1&\text{idem}  \\[4pt]
f_{4}&q^{4}+q^{2}+1&\text{idem}  \\[4pt]
f_{5}& q^{5}+q^{3}-q^{2}-1&q^{5}+2q^{3}-q^{2}-1\\[4pt]
f_{6}&q^{6}+q^{4}+q^{2}+1&\text{idem}  \\[4pt]
f_{7}&q^{7}+q^{5}+q^{4}-q^{2}-1 &\text{idem}   \\[4pt]
f_{8}&q^{8}+q^{6}+q^{4}+q^{2}+1 &\text{idem}     \\[4pt]
f_{9}&q^{9}+q^{7}+q^{5}-q^{4}-q^{2}-1 &q^{9}+q^{7}+2q^{5}-q^{4}-q^{2}-1 
   \end{array}
   $$
 \end{center}

The table below gives the numerical values for $q=2,3$ and $4$.

\begin{center}
$$
   \begin{array}{|l||c|c|c|c|c|c|c|}
     \hline
     q\setminus f_{n} & f_{1} & f_{2} & f_{3} & f_{4} & f_{5} & f_{6} & f_{7} \\[4pt] \hline
    q=2
    & 3 & 5 & 11 &21 &43 &85 &171\\[4pt]\hline
     q=3
    & 2 & 10 & 35 &91 &260 &820&2501\\[6pt]
     \hline
      q=4
    & 7 & 17 & 79 &273 &1135 &4369&17696\\[6pt]
     \hline

   \end{array}
   $$
 \end{center}
 The sequence for $q=2$ is the Jacobsthal sequence, see A001045 in \cite{OEIS}.  Indeed, from the explicit formulas of Theorem \ref{nbfriz}, one can check  that for $q=2$ one has
 $$
 f_{n}=f_{n-1}+2f_{n-2}
 $$
 for all $n\geq 3$.
 
The other sequences have no entry so far in OEIS.

\appendix
\section{Partitions of a circular set}\label{part}
As an application of the computation in Section \ref{count} and independently of Section~\ref{mon} and Section~\ref{friz} we solve an enumeration problem of restricted partitions of a circular set.

\subsection{Definition}
We consider the set $\{1, 2, \ldots, n\}$ as the labels of $n$ cyclically ordered points on the circle, i.e. 1 and $n$ are consecutive points.
We denote by $A_{k,n}$ the number of partitions of the set into $k$ parts such that no two consecutive points are in the same part.

\begin{ex} One always has $A_{1,n}=0$ and $A_{n,n}=1$. For $n=3$, one has $A_{1,3}=A_{2,3}=0$ and $A_{3,3}=1$.
For $n=4$ one has $A_{1,4}=0$, $A_{2,4}=1$, since the only allowed partition into two parts is exactly $\{1,3\}\sqcup\{2,4\}$ and $A_{3,4}=2$ since the allowed partitions into three parts are exactly $\{1,3\}\sqcup\{2\}\sqcup\{4\}$ and $\{2,4\}\sqcup\{1\}\sqcup\{3\}$.
For $n=5$, one has $A_{3,5}=5$, since the partitions of $\{1, \ldots ,5\}$ into 3 parts are $\{1,3\}\sqcup\{2,4\}\sqcup\{5\}$ together with the 5 cyclic permutations of the indices. 
\end{ex}

Our next goal is to deduce the exact value of $A_{k,n}$ from \S\ref{count1}. These values are known, see A261139 and A105794 of \cite{OEIS}. The corresponding generating function is given in \cite{KnLo}. They are also a particular case of graphical Stirling numbers \cite{GaTh}.

\subsection{Recounting points of $\cC_{n}(\F_q)$}
We consider the set of configurations
$$
\cC_{n}(\F_{q})=\{ (v_{1}, v_{2}, \ldots, v_{n})\in (\pP^{1}(\F_{q}))^{n}, \;v_{1}\not=v_{n}, \;v_{i}\not=v_{i+1}, \;\forall i\}
$$ 
introduced in~\S\ref{monq}.
The cardinal $c_{n}=|\cC_{n}(\F_q)|$ has been computed in Lemma \ref{lemcn}. It can be related to the numbers $A_{k,n}$.

\begin{prop} For $n\geq 2$, one has
$$c_{n}=q(q+1)\sum_{k=2}^{n}A_{k,n}\prod_{j=1}^{k-2}(q-j).$$
\end{prop}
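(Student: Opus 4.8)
The plan is to stratify the configuration space $\cC_n(\F_q)$ according to the partition of the index set that each configuration induces, and to recognize the restricted partitions of the circular set as exactly the patterns that can occur. This is a direct double-counting argument: every configuration determines a restricted partition of its indices, and the number of configurations producing a given partition depends only on the number of parts.

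First I would attach to every configuration $(v_1, \ldots, v_n)\in\cC_n(\F_q)$ the partition of $\{1,\ldots,n\}$ whose blocks are the level sets $\{i : v_i = c\}$ over the values $c\in\pP^1(\F_q)$ that are actually attained. Because the defining constraint of $\cC_n$ forces $v_i\neq v_{i+1}$ for all $i$ cyclically (so in particular $v_n\neq v_1$), no two cyclically consecutive indices can lie in the same block. Hence the induced partition is precisely a partition of the circular set $\{1,\ldots,n\}$ into parts containing no two consecutive points, that is, one of the $A_{k,n}$ partitions counted in the statement, where $k$ is the number of distinct values attained.

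Next I would count the fiber over a fixed such partition $P$ with $k$ blocks. A configuration induces $P$ if and only if it is constant on each block and takes pairwise distinct values on distinct blocks; equivalently, it is determined by an injection from the $k$ labelled blocks into the set $\pP^1(\F_q)$, which has $q+1$ elements. The number of such injections is the falling factorial $(q+1)q(q-1)\cdots(q-k+2)$, depending only on $k$. Since distinct partitions yield disjoint fibers and every configuration lies in exactly one fiber, grouping by the number of blocks gives
\begin{equation*}
c_n=\sum_{k=2}^{n}A_{k,n}\,(q+1)q(q-1)\cdots(q-k+2),
\end{equation*}
where the sum may start at $k=2$ because $A_{1,n}=0$ for $n\geq 2$ (a single block would force $v_1=v_2$).

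Finally I would match this with the stated formula by the elementary rewriting $(q+1)q(q-1)\cdots(q-k+2)=q(q+1)\prod_{j=1}^{k-2}(q-j)$, the product being empty and equal to $1$ when $k=2$. No step here is a genuine obstacle; the only points requiring care are that it is the full cyclic constraint (including $v_n\neq v_1$, not merely $v_i\neq v_{i+1}$ for $i<n$) that makes the induced partition circular-restricted, and that the fibers are genuinely disjoint and exhaustive. I would also note that this identity, combined with the closed form $c_n=q^n+(-1)^n q$ of Lemma \ref{lemcn}, is exactly what later permits one to solve for the individual $A_{k,n}$ by comparing polynomials in $q$.
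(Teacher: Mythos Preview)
Your proof is correct and follows essentially the same argument as the paper: both stratify $\cC_n(\F_q)$ by the induced partition of $\{1,\ldots,n\}$ into level sets, observe that the cyclic condition $v_i\neq v_{i+1}$ forces this partition to avoid consecutive elements, and count the configurations over a fixed $k$-block partition as the number of injections from $k$ blocks into $\pP^1(\F_q)$, namely $(q+1)q\prod_{j=1}^{k-2}(q-j)$. Your write-up is somewhat more explicit about the fibering map and the boundary cases (the empty product at $k=2$, the vanishing of $A_{1,n}$), but the method is the same.
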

\begin{proof}

Let us choose a partition $S_{1}\sqcup\ldots\sqcup S_{k}$ of cyclic $\{1, 2, \ldots, n\}$ such that  no two consecutive integers are in the same subset $S_{\ell}$. Consider a configuration in $\cC_{n}$ satisfying $v_{i}=v_{j}$ if and only if $i$ and $j$ belong to the same part $S_{\ell}$. There are exactly $(q+1)q\prod_{j=1}^{k-2}(q-j)$ such configurations, corresponding to a choice of $k$ points in $\pP^{1}(\F_{q})$. Hence the results.
\end{proof}

In the next section we use the exact value of $c_{n}$ given by \eqref{formcn} to deduce the value of $A_{k,n}$.

\subsection{Determining $A_{k,n}$}
The problem has now been translated into a simple problem of linear algebra consisting in expanding a given polynomial $P(q)\in \Q[q]$ into the basis of the falling factorials 
$$(q)_{k}:=q(q-1)\ldots (q-(k-1)).$$
This exercise has been probably already solved in the literature. We write down a brief solution.

Let $P$ be a polynomial of degree $n$ and denote by $B_{k}$ its coefficients in the basis $\{(q)_{k}\}_{k}$, i.e. 
$$P(q)=\sum_{k=0}^{n}B_{k}\,(q)_{k}.$$
\begin{lem} 
For $0\leq k\leq n$, 
 one has
$$
B_{k}=(-1)^{k}\sum_{j=0}^{k}\dfrac{(-1)^{j}}{j!(k-j)!}P(j).
$$
\end{lem}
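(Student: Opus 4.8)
The plan is to invert the linear change of basis between the monomial (or evaluation) data of $P$ and its falling-factorial coefficients $B_k$. The key observation is that the falling factorials satisfy $(q)_k = 0$ whenever $q$ is an integer with $0 \le q \le k-1$, since the product $q(q-1)\cdots(q-(k-1))$ then contains a vanishing factor. This means that evaluating $P$ at small nonnegative integers gives a triangular system: $P(0) = B_0$, and more generally $P(j)$ only involves the coefficients $B_k$ with $k \le j$.

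Concretely, I would first write out
\begin{equation*}
P(j) = \sum_{k=0}^{j} B_k\,(j)_k = \sum_{k=0}^{j} B_k\,\frac{j!}{(j-k)!},
\end{equation*}
using that $(j)_k = j!/(j-k)!$ for integer $0 \le k \le j$ and that $(j)_k = 0$ for $k > j$. This is a lower-triangular linear system relating the vector $(P(0), P(1), \ldots, P(n))$ to the vector $(B_0, B_1, \ldots, B_n)$, with invertible diagonal entries $(j)_j = j!$, so it can in principle be solved uniquely.

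The main step is then to invert this system explicitly, and the natural tool is the standard binomial-inversion identity: if $g(j) = \sum_{k=0}^{j} \binom{j}{k} f(k)$ then $f(k) = \sum_{j=0}^{k} (-1)^{k-j}\binom{k}{j} g(j)$. To match this shape I would rewrite the triangular relation by dividing through by $j!$, setting $\tilde{P}(j) = P(j)/j!$ and $\tilde{B}_k = B_k/k!$, so that
\begin{equation*}
\tilde P(j) = \sum_{k=0}^{j} \frac{B_k}{(j-k)!} = \sum_{k=0}^{j} \frac{1}{(j-k)!}\,k!\,\tilde B_k,
\end{equation*}
which after grouping is exactly a binomial (or Cauchy-product) convolution. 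Applying the inversion formula and unwinding the factorial normalisations yields
\begin{equation*}
B_k = (-1)^k \sum_{j=0}^{k} \frac{(-1)^j}{j!\,(k-j)!}\,P(j),
\end{equation*}
as claimed.

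I do not expect a genuine obstacle here: the statement is a classical finite-difference/binomial-inversion identity and the whole content is bookkeeping with factorials. The only point requiring care is keeping the signs and the index ranges straight when passing between the convolution form and the binomial-inversion form; a clean alternative that avoids this is to verify the claimed formula directly by substituting $P(j) = \sum_{\ell} B_\ell (j)_\ell$ into the right-hand side and checking that the resulting double sum collapses to $B_k$, which reduces to the identity $\sum_{j=\ell}^{k} (-1)^{k-j}\binom{k}{j}\binom{j}{\ell} = \delta_{k\ell}$. That orthogonality relation is itself a standard consequence of the binomial theorem, so I would likely present the direct-substitution verification as the cleanest route.
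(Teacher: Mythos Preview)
Your proposal is correct and follows essentially the same route as the paper: evaluate $P$ at the integers $0,1,\ldots,n$, use $(j)_k=j!/(j-k)!$ to obtain the triangular relation $P(j)/j!=\sum_{k\le j}\frac{1}{(j-k)!}B_k$, and invert this Cauchy convolution with the sequence $1/m!$ (the paper writes the inverse matrix out explicitly, you invoke binomial inversion, but it is the same computation). Your alternative direct-substitution check via $\sum_{j=\ell}^{k}(-1)^{k-j}\binom{k}{j}\binom{j}{\ell}=\delta_{k\ell}$ is a valid and slightly cleaner variant not present in the paper.
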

\begin{proof}
By evaluating at $q=j$, and setting $p_{j}:=P(j)/j!$, one immediately obtains
$$
p_{j}=\sum_{k=0}^{j}\dfrac{1}{(j-k)!}B_{k},
$$
that can be written in a triangular linear system
\begin{equation*}
\begin{pmatrix}
p_{0}\\[5pt]
p_{1}\\[5pt]
p_{2}\\[5pt]
p_{3}\\[5pt]
\vdots\\[5pt]
p_{n}
\end{pmatrix}\,=
\begin{pmatrix}
\frac{1}{0!}\\[4pt]
\frac{1}{1!}&\frac{1}{0!}\\[4pt]
\frac{1}{2!}&\frac{1}{1!}&\frac{1}{0!}\\[4pt]
\frac{1}{3!}&
\frac{1}{2!}&\frac{1}{1!}&\frac{1}{0!}\\[4pt]
\vdots&\vdots&\vdots&&\ddots\\[6pt]
\frac{1}{n!}&\frac{1}{(n-1)!}&\frac{1}{(n-2)!}&\cdots& \frac{1}{1!}&\frac{1}{0!}\\[4pt]
\end{pmatrix}
\begin{pmatrix}
B_{0}\\[5pt]
B_{1}\\[5pt]
B_{2}\\[5pt]
B_{3}\\[5pt]
\vdots\\[5pt]
B_{n}
\end{pmatrix}
\end{equation*}
whose inverse system is
\begin{equation*}
\begin{pmatrix}
B_{0}\\[5pt]
B_{1}\\[5pt]
B_{2}\\[5pt]
B_{3}\\[5pt]
\vdots\\[5pt]
B_{n}
\end{pmatrix}\,=
\begin{pmatrix}
\frac{1}{0!}\\[4pt]
-\frac{1}{1!}&\frac{1}{0!}\\[4pt]
\frac{1}{2!}&-\frac{1}{1!}&\frac{1}{0!}\\[4pt]
-\frac{1}{3!}&
\frac{1}{2!}&-\frac{1}{1!}&\frac{1}{0!}\\[4pt]
\vdots&\vdots&&&\ddots\\[6pt]
\frac{(-1)^{n}}{n!}&\frac{(-1)^{n-1}}{(n-1)!}&\cdots&\cdots& -\frac{1}{1!}&\frac{1}{0!}\\[4pt]
\end{pmatrix}
\begin{pmatrix}
p_{0}\\[5pt]
p_{1}\\[5pt]
p_{2}\\[5pt]
p_{3}\\[5pt]
\vdots\\[5pt]
p_{n}
\end{pmatrix}.
\end{equation*}
Hence, one obtains
$$
B_{k}=(-1)^{k}\sum_{j=0}^{k}\dfrac{(-1)^{j}}{(k-j)!}p_{j}.
$$
\end{proof}

Applying the above lemma for $P=c_{n}/q(q+1)$ with $c_{n}$ given by \eqref{formcn} (note that this is indeed a polynomial in $q$ since $c_{n}$ vanishes for $q=0$ and $q=-1$), one immediately deduces the following formula.

\begin{prop}\label{Akn}
For $n\geq 2$ and $2\leq k \leq n$, one has
$$
A_{k,n}=(-1)^{k}\sum_{j=2}^{k}\dfrac{(-1)^{j}}{j!(k-j)!}\big(\,(j-1)^{n}+(-1)^{n}(j-1)\big).
$$
\end{prop}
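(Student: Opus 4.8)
The plan is to assemble the three preceding ingredients --- the closed form $c_{n}=q^{n}+(-1)^{n}q$ of Lemma~\ref{lemcn}, the combinatorial expansion of $c_{n}$ in the numbers $A_{k,n}$ from the Proposition above, and the falling-factorial inversion Lemma --- the only real content being to recognise the Proposition's expansion as an expansion in a (shifted) falling-factorial basis. Concretely, in the identity
\[
c_{n}=q(q+1)\sum_{k=2}^{n}A_{k,n}\prod_{j=1}^{k-2}(q-j)
\]
the prefactor $q(q+1)$ combines with the product to give $q(q+1)\prod_{j=1}^{k-2}(q-j)=(q+1)q(q-1)\cdots(q-k+2)$, which under the substitution $r=q+1$ is exactly the falling factorial $(r)_{k}=r(r-1)\cdots(r-k+1)$.

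First I would promote the Proposition's identity, which a priori holds only for each prime power $q$, to an identity in $\Q[q]$: both sides are polynomials in $q$ agreeing at infinitely many values, hence equal. Rewriting it as $c_{n}=\sum_{k=2}^{n}A_{k,n}\,(q+1)_{k}$ and substituting $q=r-1$ (using $c_{n}|_{q=r-1}=(r-1)^{n}+(-1)^{n}(r-1)$ from Lemma~\ref{lemcn}) yields the polynomial identity
\[
\widetilde P(r):=(r-1)^{n}+(-1)^{n}(r-1)=\sum_{k=2}^{n}A_{k,n}\,(r)_{k}.
\]
This displays the $A_{k,n}$ as precisely the coordinates of the degree-$n$ polynomial $\widetilde P$ in the falling-factorial basis $\{(r)_{k}\}$, with vanishing coordinates on $(r)_{0}$ and $(r)_{1}$.

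Next I would apply the inversion Lemma to $P=\widetilde P$, which gives at once
\[
A_{k,n}=(-1)^{k}\sum_{j=0}^{k}\frac{(-1)^{j}}{j!\,(k-j)!}\,\widetilde P(j).
\]
Substituting $\widetilde P(j)=(j-1)^{n}+(-1)^{n}(j-1)$ and observing that $\widetilde P(0)=(-1)^{n}-(-1)^{n}=0$ and $\widetilde P(1)=0$, the $j=0$ and $j=1$ terms drop out, so the summation may be started at $j=2$; this is exactly the claimed formula.

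There is no serious obstacle here: once Lemma~\ref{lemcn}, the Proposition and the inversion Lemma are granted, the argument is the single observation that $q(q+1)\prod_{j=1}^{k-2}(q-j)$ is a shifted falling factorial, followed by a direct application of the inversion formula. The only points demanding care are the passage from a pointwise (over prime powers) identity to a genuine polynomial identity, which legitimises the substitution $q=r-1$, and the bookkeeping of that shift --- it is exactly the shift $r=q+1$ that produces the argument $j-1$ in the final sum and that forces the boundary terms $j=0,1$ to vanish.
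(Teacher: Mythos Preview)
Your proof is correct and follows essentially the same route as the paper: assemble Lemma~\ref{lemcn}, the combinatorial expansion of $c_n$, and the falling-factorial inversion Lemma. The only difference is cosmetic bookkeeping of the shift---the paper applies the inversion to $P=c_n/\bigl(q(q+1)\bigr)$, whereas you absorb the factor $q(q+1)$ into the product to recognise $(q+1)_k$ directly and substitute $r=q+1$; your packaging is slightly cleaner (the index $k$ matches throughout and no separate division is needed) but the argument is the same.
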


\noindent
\textbf{Acknowledgements.}
I am deeply grateful to Valentin Ovsienko for stimulating discussions on the subject. This work is supported by the ANR project SC3A, ANR-15-CE40-0004-01.

\bibliographystyle{alpha}
\bibliography{BibFrisesCluster,BiblioMoy3}

\end{document}